\documentclass[11pt]{article}
\usepackage{amsfonts,amssymb,amsmath}
\usepackage[mathcal]{euscript}

\usepackage[ansinew]{inputenc}
\usepackage[english]{babel}
\usepackage{amsfonts,amsmath}
\usepackage{latexsym}
\usepackage{amssymb}
\usepackage{setspace}
\usepackage{hyperref}
\newtheorem{thm}{Theorem}[section]

\newtheorem{lem}{Lemma}[section]
\newtheorem{prop}{Proposition}[section]
\newtheorem{defn}{Definition}[section]
\newtheorem{rem}{Remark}[section]

\newenvironment{proof}[1][Proof]{\textbf{#1.} }{\ \rule{0.5em}{0.5em}}
\newcommand{\E}{\mathbb{E}}
\newcommand{\intot}{\int_0^t}

\newcommand{\sm}{{s-}}
\newcommand{\indiq}{{{\bf 1}}}

\newcommand{\sg}{{\rm sign}}


\hypersetup
{
backref=true, 
pagebackref=true,
hyperindex=true, 
colorlinks=true, 
breaklinks=true, 
urlcolor= blue, 
linktoc=section
linkcolor= blue, 
bookmarks=true, 
}
\begin{document}
\title{On the time inhomogeneous skew Brownian motion }
\date{March 15, 2012}
\maketitle

\author{\begin{center} {S. Bouhadou, Y. Ouknine}\footnote{Corresponding authors
 Email addresses: {\bf ouknine@ucam.ac.ma} (Y. Ouknine), {\bf sihambouhadou@gmail.com} \\ LIBMA Laboratory, Department of Mathematics, Faculty of Sciences Semlalia, Cadi Ayyad University, P.B.O. 2390 Marrakesh, Morocco.\\}$^{,}$ \footnote{This work is supported by Hassan II Academy of Sciences and Technology.\\
Email address:}
\end{center}}

\bigskip
\begin{spacing}{1.1}
\begin{abstract}
This paper is devoted to the construction of a solution for the "Inhomogenous skew Brownian motion" equation, which first appeared in a seminal paper by Sophie Weinryb, and recently, studied by \'{E}toré and Martinez. Our method is based on the use of the Balayage formula. At the end of this paper we study a limit theorem of solutions.
\end{abstract}

\bigskip

\begin{center}
\textbf{Keywords:}\\
Skew Brownian motion; Local times; Stochastic differential equation, balayage formula, Skorokhod problem.
\end{center}
\textbf{AMS classification:} 60H10, 60J60
\newpage
\section{Introduction}
The skew Brownian motion appeared in the  $70$ in the seminal work \cite{ito74a} of Itô and McKean as a natural generalization of the Brownian motion. It is a process that behaves like a Brownian motion except that the sign  of each excursion is chosen using an independent Bernoulli random variable of parameter $\alpha$.
\\
As shown in \cite{H-S}, this process is a strong solution to some stochastic differential equation (SDE) with singular drift
coefficient:
\begin{eqnarray}\label{eqcons}
 X_t&=&x+ B_t +(2\alpha-1)L_t^0(X)
 \end{eqnarray}
 where $\alpha\in (0,1)$ is the skewness parameter , $x \in \mathbb{R}$, and  $L_t^0(X)$ stands for the symmetric local time at $0$.\\
The reader may find many references concerning the homogeneous skew Brownian motion and various extensions in the literature. We cite Walsh \cite{walsh78a}, Harisson and Shepp \cite{H-S}, LeGall \cite{legall} and Ouknine \cite{Ouknine}.
\\
A related stochastic differential equation, introduced by Weinryb \cite{Weinryb} is:
\begin{eqnarray}\label{eqpr}
X_t^\alpha=x+ B_t + \intot  (2\alpha(s)-1) \;dL_s^{0}(X^\alpha), \; \;\; t \geq 0
\end{eqnarray}
where $(B_t)_{t \geq 0}$ a standard Brownian motion on some filtered probability space $(\Omega, \mathcal{F}, \mathbb{P})$, $ \alpha:\mathbb{R}^{+}\rightarrow [0,1]$ is a Borel function and $L^0(X^\alpha)$ stands for the symmetric local time at $0$ of the unknown process $X^\alpha$.\\
The process $X^\alpha$ will be called "time inhomogeneous skew Brownian motion" (ISBM in short). Of course, this equation is an extenstion of the skew Brownian motion. \\
In \cite{Weinryb}, it was shown that there is pathwise uniqueness for equation (\ref{eqpr}) but in \cite{Weinryb} the local time appearing in the equation is standard right sided local time, so that the function $\alpha$ is supposed to take values in $]- \infty,1/2]$ . As is well known, weak existence combined  with pathwise uniqueness, establishes existence and uniqueness of a strong solution to (\ref{eqpr}), via the classical result of Yamada and Watanabe. So, Our purpose in this paper, is to give an explicit construction of the solution of (\ref{eqpr}) by approximating the function $\alpha$ by a sequence of piecewise constant functions $(\alpha_n)$. In order to treat the simple case of a given piecewise constant, we are inspired by a construction given by \'{E}toré and Martinez \cite{etore}, but our proof is totally different. 
Instead of trying to show that our construction preserves the Markov property and that the constructed process satisfies (\ref{eqpr}), we use the Balayage formula: the key of "first order calculus".  After its first appearance in Azéma and yor \cite{Yor}, it was later studied extensively in a series of papers as \cite{karoui},\cite{stri} and \cite{sigma}. Note that the point of our departure in this sense, is an interesting observation of Prokaj (see Proposition $3$ \cite{prokaj}), his work is strongly related to the work of Gilat. \cite{gilat}.
\\In \cite{gilat}, the author proved that every nonnegative submartingale is equal in law to the absolute value of a martingale $M$. Barlow in \cite{Barlow-2} gives an explicit construction of the martingale $M$ but for a remarkable class of submartingales. We will show that this result of Barlow is a direct consequence of the Balayage formula.\\
The paper is organized as follows: The second section, we start it with the progressive version of the Balayage formula and we show how to deduce from it a generalization of the observation of Prokaj, in the same section we give a simple proof of the result of Barlow \cite{Barlow-2}. Section $3$ is devoted to the construction of a weak solution of equation (\ref{eqpr}) with a piecewise constant function. Extension of the above result to general case where $\alpha$ is a Borel function is the subject of same section. At end of this work, we study the stability of the solutions of equation (\ref{eqpr}) by using the Skorokhod representation theorem. These result was obtained by \'{E}toré and Martinez \cite{etore} but under some monotonicity assumptions.
\subsection{Preliminaries}
The ISBM has many interesting and sometimes unexpected properties see Etoré and Martinez \cite{etore}. So, The main facts that we use in this paper will be summarized in this section.\\
\textbf{Notation}
\\
For a given semimartingale $X$, we denote by $L_t^0(X)$ its symmetric local time at level $0$. The expectation $\mathbb{E}^x$ refers to the probability measure $\mathbb{P}^x$ under which $X^\alpha_0=x,$ $\mathbb{P}$-a.s.\\
If $k$ is a measurable bounded process, $^p k$ will be denote the predictable projection of $k$. \\
$(\sigma_t)$ is the shift operator acting on time dependent functions as follows:
$ \alpha \circ \sigma_t(s)=\alpha(t+s)$.
\begin{prop}[see \cite{RY}]
Assume $(1)$ has a weak solution $X^{\alpha}$. Then under $\mathbb{P}^0$,
$$(|X_t^{\alpha}|)_{t \geq 0}\stackrel{\mathcal{L}}{\sim}(|B_t|)_{t \geq 0}.$$
\end{prop}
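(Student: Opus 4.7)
The plan is to use Tanaka's formula for the symmetric local time and reduce the claim to the Skorokhod identification of reflected Brownian motion.

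First, I would apply Tanaka's formula for the symmetric local time to the continuous semimartingale $X^\alpha$, which gives
\[
|X_t^\alpha| \;=\; |X_0^\alpha| + \int_0^t \sg(X_s^\alpha)\,dX_s^\alpha + L_t^0(X^\alpha),
\]
where $\sg$ is the odd sign function with $\sg(0)=0$ (the convention paired with the symmetric local time). Substituting the defining equation (\ref{eqpr}) yields
\[
|X_t^\alpha| \;=\; |X_0^\alpha| + \int_0^t \sg(X_s^\alpha)\,dB_s + \int_0^t \sg(X_s^\alpha)(2\alpha(s)-1)\,dL_s^0(X^\alpha) + L_t^0(X^\alpha).
\]
The third term vanishes: the measure $dL_s^0(X^\alpha)$ is supported on $\{s:X_s^\alpha=0\}$ and $\sg$ vanishes on that set.

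Next I would identify the driving martingale. Setting $\beta_t := \int_0^t \sg(X_s^\alpha)\,dB_s$, one has
\[
\langle \beta \rangle_t \;=\; \int_0^t \sg^2(X_s^\alpha)\,ds \;=\; \int_0^t \indiq_{\{X_s^\alpha\neq 0\}}\,ds \;=\; t,
\]
the last equality because the occupation times formula applied to $X^\alpha$ (which has $d\langle X^\alpha\rangle_s=ds$) shows that $\{s:X_s^\alpha=0\}$ has Lebesgue measure zero almost surely. By L\'evy's characterization, $\beta$ is a standard Brownian motion, and under $\mathbb{P}^0$ we obtain
\[
|X_t^\alpha| \;=\; \beta_t + L_t^0(X^\alpha), \qquad t\geq 0,
\]
with $L^0(X^\alpha)$ continuous, nondecreasing, starting at $0$, and growing only on $\{|X^\alpha|=0\}$.

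Finally, this is precisely the Skorokhod problem associated with the path $\beta$. By Skorokhod's lemma (uniqueness part), the pair $(|X^\alpha|,L^0(X^\alpha))$ is determined pathwise from $\beta$ by $L_t^0(X^\alpha)=\sup_{s\leq t}(-\beta_s)^+$ and $|X_t^\alpha|=\beta_t + L_t^0(X^\alpha)$. Since $\beta$ is a standard Brownian motion and $(|B_t|)_{t\geq 0}$ admits the same Skorokhod representation in terms of a standard Brownian motion (again via Tanaka), the processes $(|X_t^\alpha|)_{t\geq 0}$ and $(|B_t|)_{t\geq 0}$ have the same law. The only subtle point is the use of $\sg(0)=0$ in tandem with the \emph{symmetric} local time so that the skewness term drops out; once that convention is fixed, the argument is routine.
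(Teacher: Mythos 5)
Your proof is correct, and it is the standard argument behind the paper's citation: the paper itself gives no proof of this proposition, delegating it to \cite{RY}, where the result is obtained exactly as you do — symmetric Tanaka formula with $\sg(0)=0$ killing the skewness term against $dL^0(X^\alpha)$, L\'evy's characterization for $\beta$, and the uniqueness part of Skorokhod's lemma identifying $|X^\alpha|$ as the reflection of a standard Brownian motion. Your handling of the two delicate points (the symmetric local time convention paired with the odd sign function, and the Lebesgue-null occupation of $\{X^\alpha_s=0\}$ via the occupation times formula) is exactly right, so nothing is missing.
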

\bigskip
In the introductory article \cite{Weinryb}, it is shown that there is pathwise uniqueness for the weak solutions of  equation (\ref{eqpr}).
\begin{thm}
Pathwise uniqueness holds for the weak solutions of equation  (\ref{eqpr}).
\end{thm}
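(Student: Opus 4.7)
The plan is to mimic the classical pathwise-uniqueness argument for the homogeneous skew Brownian motion (Harrison--Shepp, Le Gall), i.e.\ apply Tanaka's formula to the difference of two solutions driven by the same Brownian motion and exploit the support properties of the symmetric local times. Suppose $X^{1}$ and $X^{2}$ are two weak solutions of (\ref{eqpr}) defined on the same filtered probability space, driven by the same Brownian motion $B$, with common initial value $x$. Their difference
\[
V_t := X^1_t - X^2_t = \int_0^t (2\alpha(s)-1)\, d\bigl(L_s^0(X^1)-L_s^0(X^2)\bigr)
\]
is a continuous process of finite variation, so $\langle V,V\rangle\equiv 0$ and its semimartingale local time at $0$ vanishes. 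The symmetric Tanaka formula then collapses to the identity $|V_t|=\int_0^t \sg(V_s)\, dV_s$.

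Next I would rewrite this integral by using the support of each of the two local times. Since $dL^0_s(X^1)$ is carried by $\{X^1_s=0\}$, on this set $V_s=-X^2_s$ and the symmetric convention $\sg(0)=0$ yields $\sg(V_s)=-\sg(X^2_s)$; the analogous identity holds with the roles of $X^1$ and $X^2$ swapped when integrating against $dL^0_s(X^2)$. Substituting gives
\[
|V_t| = -\int_0^t \sg(X^2_s)(2\alpha(s)-1)\, dL^0_s(X^1) - \int_0^t \sg(X^1_s)(2\alpha(s)-1)\, dL^0_s(X^2),
\]
in which the common-zero set $\{X^1_s=X^2_s=0\}$ contributes nothing. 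The concluding step is to use the nonnegativity of the left-hand side together with the representation $|X^i_t|=|x|+\int_0^t \sg(X^i_s)\, dB_s + L^0_t(X^i)$ and Skorokhod's reflection lemma, which characterises $L^0(X^i)$ from the path of $|X^i|$; invoking Proposition~1 (so that both $|X^i|$ have the same law as $|B|$) one identifies the two local time measures on the diagonal and forces both integrals to vanish, hence $V\equiv 0$.

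The main obstacle is precisely this last step: because $2\alpha(s)-1$ may change sign as $s$ varies, the two integrals on the right are not termwise of a definite sign, so a naive comparison argument fails. One must therefore track signs jointly using the Skorokhod-type representation of the local times and the fact that near a visit of $X^{i}$ to $0$ the sign of the other solution is controlled by the common Brownian increment; this is the observation exploited in \cite{Weinryb}, to which I would ultimately appeal to complete the proof in the most economical way.
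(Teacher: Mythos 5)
The first thing to compare against is easy to state: the paper contains no proof of this theorem at all. It is imported from Weinryb ("In the introductory article \cite{Weinryb}, it is shown that there is pathwise uniqueness for the weak solutions of equation (\ref{eqpr})") and then used as a black box, via Yamada--Watanabe, to upgrade the weak solutions constructed later to strong ones. So your closing move --- deferring the decisive step to \cite{Weinryb} --- is in substance exactly the paper's treatment, and everything you add before it is material the paper does not contain. That added computation is correct as far as it goes: with both solutions driven by the same $B$ and started from the same $x$, the difference $V=X^1-X^2$ is continuous of bounded variation, $\langle V,V\rangle\equiv 0$ forces all its local times at $0$ to vanish (hence also $\intot \indiq_{\{V_s=0\}}\,dV_s=0$, so the symmetric convention causes no loss), and the substitutions $\sg(V_s)=-\sg(X^2_s)$ on $\{X^1_s=0\}$ and $\sg(V_s)=\sg(X^1_s)$ on $\{X^2_s=0\}$ give your displayed identity for $|V_t|$.

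The genuine gap is the concluding mechanism you float before conceding, and it should be flagged as unsound rather than left as a hint. Proposition 1 only says that $|X^i|$ has the law of $|B|$ under $\mathbb{P}^0$; this is marginal, distributional information about each solution separately, and no such information can ``identify the two local time measures'' pathwise --- if it could, weak uniqueness would imply pathwise uniqueness, which fails in general (Tanaka's equation), and which is precisely why the paper needs Weinryb's theorem in addition to its weak construction. The appeal to Skorokhod's reflection lemma fails for a concrete reason: it represents $L^0_t(X^i)$ as $\sup_{s\le t}\bigl(-|x|-\int_0^s \sg(X^i_u)\,dB_u\bigr)^+$, but the two martingale parts $\int \sg(X^1_u)\,dB_u$ and $\int \sg(X^2_u)\,dB_u$ are in general \emph{different} Brownian motions (they agree only where the solutions carry the same sign), so nothing identifies the two suprema. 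Worse, proving $L^0(X^1)=L^0(X^2)$ is not a stepping stone to uniqueness but equivalent to it, since $V_t=\intot (2\alpha(s)-1)\,d\bigl(L^0_s(X^1)-L^0_s(X^2)\bigr)$; this route is circular. What remains after your displayed identity --- showing that the two local-time integrals, which have no definite sign once $\alpha(\cdot)$ crosses $1/2$, must vanish --- is the entire content of Weinryb's note. Your proposal is therefore a correct reduction plus a citation, i.e.\ no more of a proof than the paper's own one-line attribution, and the intermediate ``identification on the diagonal'' step should be deleted, since as stated it is false.
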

\begin{defn}
For $t>s$, $x,y\in\mathbb{R}$, we set
\begin{eqnarray*}
p^{\alpha}(s,t;x,y)&= & \int_0^{t-s}\frac{1+\mathrm{sgn}(y)(2 \alpha-1)\circ\sigma_s(u)}{2}\frac{|y|}{\pi}\frac{e^{-\frac{y^2}{2(t-(s+u))}}}{\sqrt{u}(t-s-u)^{3/2}}e^{-x^2/2u}du\\
&+&\frac{1}{\sqrt{2\pi (t-s)}}\exp{-\frac{(y-x)^2}{2(t-s)}} - \exp{-\frac{(y+x)^2}{2(t-s)}} \indiq_{\{xy>0\}}.
\end{eqnarray*}
\end{defn}
\begin{prop}[See \cite{etore}]\label{prop-markov}

Let $X^\alpha$ a strong solution of (\ref{eqpr}) corresponding to the Brownian motion $B$.

i) The process $X^\alpha$ is a Markov process

ii) For all $x,y\in\mathbb{R}$ we have
\begin{equation}
\label{eq-Psx}
\mathbb{P}^{s,x}(X^\alpha_t\in dy)=p^\alpha(s,t,x,y)dy.
\end{equation}
\end{prop}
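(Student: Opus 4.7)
\emph{Part (i): Markov property.} I would reduce the Markov property to pathwise uniqueness via a time shift. Fix $s\ge 0$ and put $Y_u := X^\alpha_{s+u}$, $\widetilde B_u := B_{s+u}-B_s$. Because the symmetric local time satisfies $L^0_{s+u}(X^\alpha)-L^0_s(X^\alpha)=L^0_u(Y)$, subtracting (\ref{eqpr}) at times $s$ and $s+u$ gives
$$Y_u \;=\; X^\alpha_s \;+\; \widetilde B_u \;+\; \int_0^u \bigl(2\alpha(s+r)-1\bigr)\, dL^0_r(Y),$$
so $Y$ is a strong solution of the ISBM equation with shifted coefficient $\alpha\circ\sigma_s$, driven by the Brownian motion $\widetilde B$ (independent of $\mathcal F_s$) and started at $X^\alpha_s$. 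Pathwise uniqueness (Theorem~1.1, applied to the shifted equation) together with Yamada--Watanabe then forces the conditional law of $Y$ given $\mathcal F_s$ to depend only on $X^\alpha_s$, which is exactly the Markov property.

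\emph{Part (ii): Transition density.} By (i) it suffices to compute $\mathbb P^{0,x}(\widetilde X_\tau \in dy)$ where $\tau := t-s$ and $\widetilde X$ is the ISBM with coefficient $\beta := \alpha\circ\sigma_s$ starting at $x$. The structure of $p^\alpha$ suggests decomposing on the last zero $g := \sup\{u \in [0,\tau] : \widetilde X_u = 0\}$ (with $\sup\emptyset = 0$):
$$\mathbb P^{0,x}(\widetilde X_\tau \in dy) \;=\; \mathbb P^{0,x}(g = 0,\,\widetilde X_\tau\in dy) \;+\; \int_0^\tau \mathbb P^{0,x}(g\in du,\,\widetilde X_\tau \in dy).$$
On $\{g=0\}$ the process $\widetilde X$ remains on the side of $x$, and since the local time term in (\ref{eqpr}) is inactive off $0$, it coincides there with a Brownian motion killed at $0$; the standard reflection density produces the second line of $p^\alpha$, carrying the factor $\indiq_{\{xy>0\}}$.

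\emph{Main step / obstacle.} The delicate part is the restriction to $\{g>0\}$. I would combine two ingredients. First, a Tanaka-formula computation (the content of Proposition~1, extended to arbitrary $x$) gives $|\widetilde X|\stackrel{\mathcal L}{=}|B^x|$; since both $g$ and $|\widetilde X_\tau|$ are measurable with respect to $|\widetilde X|$, the joint law of $(g,|\widetilde X_\tau|)$ is the classical Brownian one,
$$\frac{|y|}{\pi\sqrt{u}\,(\tau-u)^{3/2}}\, e^{-x^2/(2u)}\, e^{-y^2/(2(\tau-u))}\, du\, d|y|.$$
Second, and this is the crux, conditional on $g=u$ the sign of $\widetilde X_\tau$ (i.e., of the excursion straddling $\tau$) must be shown to be Bernoulli with parameter $\beta(u)$, independently of $|\widetilde X_\tau|$. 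I would establish this first for piecewise constant $\beta$, where on each constant piece $\widetilde X$ is in law a classical skew Brownian motion for which the independent Bernoulli signing of excursions is standard, and then pass to general Borel $\beta$ through the approximation and Skorokhod-representation stability developed in Section~3. Weighting the Brownian joint density above by $\tfrac{1+\mathrm{sgn}(y)(2\beta(u)-1)}{2}$ and using $\beta(u)=\alpha(s+u)$ reproduces the first line of $p^\alpha(s,t;x,y)$, and combining with the killed-BM contribution from $\{g=0\}$ yields the stated formula.
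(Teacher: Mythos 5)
The paper itself offers no proof of this proposition: it is quoted verbatim from \'Etor\'e and Martinez \cite{etore}, so there is no internal argument to compare yours against. Judged on its own merits, your plan is sound and is in fact close in spirit to the cited source: part (i) via the time shift, pathwise uniqueness (Theorem 1.1) and Yamada--Watanabe is the standard route, and your last-exit decomposition for part (ii) is the right mechanism. Your identification of the crux is accurate, and a useful sanity check confirms your weighting: computing $\mathbb{E}^0[X_t]$ from the equation gives $\int_0^t(2\alpha(u)-1)\,du/\sqrt{2\pi u}$, which matches integrating $y$ against the first line of $p^\alpha$ precisely because the Bernoulli parameter there is $\alpha$ evaluated at the \emph{last zero} $u$, not at the current time $t$.

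Two cautions. First, a circularity risk: you propose to pass from piecewise constant to Borel $\beta$ ``through the approximation and Skorokhod-representation stability developed in Section 3,'' but the proof of Theorem 3.2 in this paper itself invokes the present proposition (it uses that $\widehat{X}^{n_k}$ is Markov with transition family $p^{\alpha_n}$). Your ordering survives only because you establish the piecewise-constant case independently, by slab-wise reduction to the classical homogeneous SBM; make that logical dependence explicit, since otherwise the argument is circular. Second, the conditional-sign step needs care exactly where an excursion straddles a partition point $t_i$: the sign must be the one assigned at the excursion's start $g$ (with parameter $\alpha(g)$) and must remain constant across slab boundaries --- this holds for the slab-wise construction of \cite{etore}, where the SBM restarted at $t_i$ from $X_{t_i}\neq 0$ keeps its sign until hitting zero, but note it would \emph{fail} for the process $Z|B|$ as literally defined in Section 3.1 of this paper, whose $Z$ resamples signs inside excursions at the times $t_i$ and would produce the weight $\alpha(t)$ instead of $\alpha(g)$. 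Finally, your extension of Proposition 1.1 to $x\neq 0$ (that $|X^\alpha|$ is a reflected Brownian motion from $|x|$) does follow from symmetric Tanaka plus uniqueness for the Skorokhod reflection problem, since $\sg(X_s)(2\alpha(s)-1)\,dL_s^0(X)$ vanishes under the symmetric convention $\sg(0)=0$; spelling this out would close the one remaining gap in your use of the Brownian joint law of $(g,|\widetilde X_\tau|)$.
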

The next results shows a Kolmogorov's continuity criterion for $X^\alpha$ uniform with respect to the parameter function $\alpha(.)$.
\begin{prop}[see \cite{etore}]\label{kolmo}
There exists a universal constant $C>0$ such that for all $\epsilon \geq 0$ and $t\geq 0$
$$\mathbb{E}^{x}|X_{t+ \epsilon}^{\alpha}-X_{t}^\alpha|^4 \leq C \epsilon^2.$$
\end{prop}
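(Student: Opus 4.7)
The plan is to split the increment of $X^\alpha$ into its Brownian part and its finite-variation part coming from the local time, and to estimate each in $L^4$ with a constant independent of $\alpha$.

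First, from equation (\ref{eqpr}),
\begin{equation*}
X_{t+\epsilon}^\alpha - X_t^\alpha = (B_{t+\epsilon}-B_t) + \int_t^{t+\epsilon}(2\alpha(s)-1)\,dL_s^0(X^\alpha),
\end{equation*}
and since $\alpha$ takes values in $[0,1]$ we have $|2\alpha(s)-1|\le 1$, so the drift piece is bounded in absolute value by $L_{t+\epsilon}^0(X^\alpha)-L_t^0(X^\alpha)$. Using $(a+b)^4\le 8(a^4+b^4)$ and $\mathbb{E}|B_{t+\epsilon}-B_t|^4 = 3\epsilon^2$, matters reduce to establishing a bound
\begin{equation*}
\mathbb{E}^x\bigl(L_{t+\epsilon}^0(X^\alpha)-L_t^0(X^\alpha)\bigr)^4 \le C'\epsilon^2
\end{equation*}
with $C'$ independent of $x$ and of $\alpha(\cdot)$.

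For this local-time increment, the key step is to apply the symmetric Tanaka formula to $X^\alpha$: because $dL^0(X^\alpha)$ is carried by $\{X^\alpha=0\}$ and $\sg(0)=0$ in the symmetric convention, the drift contribution drops out and one obtains
\begin{equation*}
|X_t^\alpha| = |x| + \int_0^t \sg(X_s^\alpha)\,dB_s + L_t^0(X^\alpha).
\end{equation*}
Setting $\beta_t = \int_0^t \sg(X_s^\alpha)\,dB_s$, the zero set of $X^\alpha$ has zero Lebesgue measure, hence $\langle\beta\rangle_t = t$ and $\beta$ is a Brownian motion by L\'evy's characterization. The pair $(|X^\alpha|, L^0(X^\alpha))$ therefore solves the Skorokhod reflection problem for the input $|x|+\beta$.

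The next step is the classical Skorokhod increment estimate. Since $|X_s^\alpha|=|x|+\beta_s+L_s^0(X^\alpha)\ge0$, one checks that
\begin{equation*}
L_{t+\epsilon}^0(X^\alpha) - L_t^0(X^\alpha) \le \sup_{t\le s\le t+\epsilon}(\beta_t-\beta_s)^+ \le \sup_{t\le s\le t+\epsilon}|\beta_s-\beta_t|,
\end{equation*}
and Doob's $L^4$ inequality applied to the Brownian motion $(\beta_{t+u}-\beta_t)_{u\ge 0}$ gives $\mathbb{E}^x\sup_{0\le u\le\epsilon}|\beta_{t+u}-\beta_t|^4\le(4/3)^4\cdot 3\epsilon^2$. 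Combining with the Brownian term yields the claim with a universal constant $C$.

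The main obstacle I expect is the careful justification of the Skorokhod bound, in particular verifying that the drift piece of $|X^\alpha|$ indeed vanishes (this uses simultaneously the symmetric sign convention, the fact that $L^0$ is the symmetric local time, and the support property of $dL^0$), and that the resulting bound is genuinely uniform in the starting point $x$ and in the parameter function $\alpha$. Once those points are secured, the $L^4$ estimate follows from standard maximal inequalities, and the proof is essentially arithmetic.
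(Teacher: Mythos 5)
Your proof is correct, but it takes a genuinely different route from the paper: the paper itself gives no proof of Proposition \ref{kolmo}, importing it from \'{E}tor\'{e} and Martinez \cite{etore}, where the estimate is obtained from the explicit law of the ISBM --- one bounds the fourth moment through the transition density $p^\alpha(s,t;x,y)$ and Gaussian kernel estimates, relying on the Markov property of Proposition \ref{prop-markov}. Your argument is instead pathwise and self-contained: the symmetric Tanaka formula together with the support property of $dL^0(X^\alpha)$ and the convention $\sg(0)=0$ kills the drift, so that $(|X^\alpha|,L^0(X^\alpha))$ solves the Skorokhod reflection problem for the input $|x|+\beta$, and the classical increment bound $L^0_{t+\epsilon}(X^\alpha)-L^0_t(X^\alpha)\le \sup_{t\le s\le t+\epsilon}(\beta_t-\beta_s)^+$ (valid because $L^0_t\ge -|x|-\beta_t$, while $L^0$ increases only at times $s$ with $|X^\alpha_s|=0$, i.e.\ $L^0_s=-|x|-\beta_s$) reduces everything to Doob's maximal inequality for $\beta$; since $|2\alpha(s)-1|\le 1$ and none of the constants see $\alpha$ or $x$, the required uniformity is automatic. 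Two small points you should make explicit: the claim that the zero set of $X^\alpha$ has null Lebesgue measure follows from the occupation times formula because $\langle X^\alpha\rangle_t=t$; and in fact you can dispense with L\'evy's characterization altogether, since the Burkholder--Davis--Gundy inequality applied to the martingale $\beta$ with $\langle\beta\rangle_{t+\epsilon}-\langle\beta\rangle_t\le\epsilon$ yields the same $L^4$ bound, slightly shortening the argument. What your approach buys is independence from the explicit density formula and from the Markov property, so it applies verbatim to any solution of (\ref{eqpr}) with measurable $\alpha$ valued in $[0,1]$; what the density computation in \cite{etore} buys is finer, pointwise information on the law of $X^\alpha$, which that paper needs for other purposes but which is not required for this moment estimate.
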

\section {Main results}\label{main result}
\subsection{Some results in the case  where $\alpha$ is constant}
If $M$ is martingale, $k$ is a predictable process such that $\intot k_s^2d\langle M,M\rangle_s< \infty$, we saw how stochastic integration allows to construct a new martingale namely $ \intot k_s dM_s$ with increasing process $\intot k_s^2 d\langle M,M\rangle_s$. The study of the analogous for the local time at $0$ is strongly related to the first order calculus(see e.g.\cite{Yor} ).
\\
\bigskip
At the beginning, let
 $Y=(Y_{t},\;t \geq 0)$ be a continuous $\mathcal{F}_{t}$-semimartingale issued from zero. For every $t>0$ we define
$$\gamma_{t}=sup\{s \leq t\;:\; Y_{s}=0\},$$
with the convention $\sup(\emptyset)=0$, hence in particular $\gamma_{0}=0$. The random variables $\gamma_{t}$ are clearly not stopping times since they depend on the future.
\\
Before stating and proving our main theorem, we shall need a powerful result (see \cite{meyer})
\begin{prop}(Balayage Formula)
\begin{enumerate}
\item[(i)] Let $Y$ be a continuous semimartingale, if $k$ is a bounded progressive process,
then
 $$k_{\gamma_{t}}Y_{t}=k_{0}Y_{0}+\int_{0}^{t}\;^pk_{\gamma_{s}}\;dY_{s}+R_t,$$
 where $R$ is a process of bounded variation, adapted, continuous such that the measure $dR_s$ is carried by the set $\{Y_s=0\}$.
\end{enumerate}
\end{prop}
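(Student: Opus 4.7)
The plan is a two-stage monotone class argument: first establish the identity for all bounded predictable $k$ (where ${}^p k=k$ and the remainder is zero), and then deduce the general progressive case by decomposing $k$ into its predictable projection plus a residual contribution that gets absorbed into $R_t$. The central geometric fact is that $s \mapsto \gamma_s$ is non-decreasing and locally constant on every open excursion interval $(g_i,d_i)$ of $Y$ away from zero, so $k_{\gamma_s}$ is constant on each such interval.

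For the elementary predictable case, take $k_s = H\,\mathbf{1}_{(\sigma,\tau]}(s)$ with $\sigma \leq \tau$ stopping times and $H$ bounded and $\mathcal{F}_\sigma$-measurable, and set $T := \inf\{s>\sigma : Y_s=0\}$, $T' := \inf\{s>\tau : Y_s=0\}$. Directly from the definition of $\gamma$ one finds $k_{\gamma_s} = H\,\mathbf{1}_{[T,T')}(s)$ on $\{T\leq\tau\}$ and $k_{\gamma_s} \equiv 0$ on $\{T>\tau\}$. Continuity of $Y$ forces $Y_T = Y_{T'} = 0$ at the finite hitting times, so a case analysis over the three regimes $t<T$, $T\leq t<T'$, $t\geq T'$ yields
\[
\int_0^t k_{\gamma_s}\,dY_s = H\bigl(Y_{t\wedge T'}-Y_{t\wedge T}\bigr) = k_{\gamma_t}Y_t - k_0Y_0,
\]
so the identity holds with $R\equiv 0$. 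The class of bounded predictable $k$ for which this holds is closed under bounded monotone limits (dominated convergence for the stochastic integral and pathwise convergence on the left), so the monotone class theorem promotes it to all bounded predictable $k$.

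For $k$ merely bounded and progressive, define
\[
R_t := k_{\gamma_t}Y_t - k_0Y_0 - \int_0^t {}^p k_{\gamma_s}\,dY_s.
\]
On every open excursion interval $(g_i,d_i)$ of $Y$, $\gamma$ is constant, so $k_{\gamma_s}$ and ${}^p k_{\gamma_s}$ are each constant there, and the predictable case applied locally on $[g_i,d_i]$ forces $R$ to be constant on $(g_i,d_i)$. Hence $dR_s$ is carried by $\{Y_s=0\}$, and $R$ is continuous as the sum of two continuous terms minus the stochastic integral. For the bounded-variation property, I would dominate the fluctuations of $R$ across excursion endpoints by a constant multiple of $L_t^0(Y)$, using the uniform bound on $|k|$ and the finite total local time of $Y$ at zero.

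The main obstacle is this last step: showing that $R$ is genuinely of bounded variation. The support claim is almost automatic from the local constancy of $\gamma$ on excursions; the bounded-variation bound, however, requires careful bookkeeping of the contributions at excursion endpoints, which accumulate in principle with the density of the zero set $\{Y=0\}$. Relating them to the symmetric local time $L_t^0(Y)$ — itself a bounded, nondecreasing continuous process — is the standard route and gives the required control, but must be executed with attention to the progressive (rather than predictable) measurability of $k$.
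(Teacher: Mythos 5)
First, a remark on the comparison itself: the paper does not prove this proposition --- it is quoted verbatim from Meyer--Stricker--Yor \cite{meyer} (``we shall need a powerful result (see \cite{meyer})'') --- so your attempt has to stand on its own. Your first stage is sound and standard: for $k_s=H\mathbf{1}_{(\sigma,\tau]}(s)$ the identification $k_{\gamma_s}=H\mathbf{1}_{[T,T')}(s)$ on $\{T\le\tau\}$ is correct, the three-case computation gives the formula with $R\equiv 0$, and the monotone class argument extends it to all bounded predictable $k$. Two points there are glossed but repairable: what you actually need is that $s\mapsto k_{\gamma_s}$ (not $k$) is predictable, so that it equals its own projection --- true here because $T,T'$ are hitting times of the closed set $\{0\}$ by a continuous adapted process, hence predictable times; and in the progressive stage, your assertion that ${}^p(k_{\gamma})$ is constant on each excursion interval is not automatic from ``local constancy of $\gamma$'': it rests on the lemma that $k_{\gamma}$ and ${}^p(k_{\gamma})$ agree on $\{Y\neq 0\}$ up to evanescence, which one proves by checking that $k_{\gamma_T}\mathbf{1}_{\{Y_T\neq 0\}}$ is $\mathcal{F}_{T-}$-measurable for every predictable time $T$ (approximate $\gamma_T$ by $\gamma_q$, $q<T$ rational, using that there are no zeros in $[q,T]$) and then invoking the predictable section theorem. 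You assert this constancy without argument.

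The genuine gap is the one you name yourself: the bounded variation of $R$, and your proposed repair does not work as stated. ``Bookkeeping of the contributions at excursion endpoints'' cannot be executed, because when the zero set of $Y$ is perfect (the relevant case, e.g.\ $Y$ a reflected Brownian motion) the excursion intervals are dense and cannot be traversed in order; $R$ is continuous and moves only on $\{Y=0\}$, and a continuous function constant off a closed Lebesgue-null set can perfectly well have infinite variation, so support plus continuity --- which is all you have established --- gives nothing. The mechanism that actually produces the bound $\|k\|_{\infty}\cdot\tfrac12 L^0$ is the downcrossing approximation, which is precisely the device this paper deploys in Propositions 2.2 and 2.4 to identify $R$ in its special cases: reduce to $Y\ge 0$ via Tanaka, freeze $k_{\gamma}$ along the stopping times $\tau^{\epsilon}_{2k+1},\tau^{\epsilon}_{2k+2}$, so the defect becomes the Riemann--Stieltjes integral $\int_0^t Y_s\,dZ^{\epsilon}_s$ whose total variation on $[0,t]$ is at most $\epsilon\,\|k\|_{\infty}\,N(t,\epsilon)$; then L\'evy's downcrossing theorem gives $\epsilon N(t,\epsilon)\to \tfrac12 L^0_t(Y)$, and lower semicontinuity of total variation along the limit yields that $R$ has variation at most $\tfrac12\|k\|_{\infty}L^0_t(Y)$. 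Without this (or an equivalent quantitative step), your proof establishes only the easy predictable formula plus the support property of $R$, while the bounded variation claim --- the entire content of the progressive version beyond the predictable one, and the property on which the paper's identification of $R_t$ with $\int_0^t(2\alpha(s)-1)\,dL_s^0$ depends --- remains unproven.
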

\begin{rem}
The last zero of continuous processes plays an essential role in the balayage formula: this fact is quite surprising since such random time is not a stopping time and hence falls outside the domain of applications of the classical theorems in stochastic analysis.
\end{rem}

Let $W$ be a Wiener process, it is well known (see. e.g. \cite{RY}) that
\begin{eqnarray}\label{fold}
|W_t|&=&\beta_t+ \sup(- \beta_s)
\end{eqnarray}
where $\beta= \int \sg(W_s) dW_s$ is another Wiener process. In other words, starting with $\beta$, the skorokhod reflection of $\beta$ defined as the right hand side of (\ref{fold}) can be unfolded to a Wiener Process $W$. A similar statement was proved by Prokaj for continuous semimartingales in \cite{prokaj}. Precisely, If $U$ is continuous semimartingale starting from zero and $Y$ denotes the Skorokhod reflection of $U$.  Prokaj has showed that $Y$ can be represented as the reflection $|Y'|$ of an appropriate semimartingale $Y'$ related to $Y$ via the Tanaka equation
$$Y_t=|Y'_t|\;\;\;\; \mbox{and}\;\;\;\; Y'_t= \intot \sg(Y'_s)dU_s.$$
The  proof of this result is based on his key observation corresponding to the case $\alpha=\frac{1}{2}$ in Proposition \ref{prop3} below.
\\
In what follows, we show that this observation is a direct consequence of the balayage formula. We review here this method from Prokaj:
\\

Put $\mathfrak{z}=\{t \geq 0\;:\; Y_t=0\}$, this set cannot be ordered. However, the set $\mathbb{R}^+ \setminus \mathfrak{z}$ can be decomposed as a countable union  $\cup_{ n \in \mathbb{N}} J_n$ of intervals $J_n$. Each interval $J_n$ corresponds to some excursion of $Y$. That is if $J_n=]g_n,d_n[$,\\
$$Y_t \neq 0\;\mbox{for} \; t \in ]g_n,d_n[,\; \mbox{and}\; Y_{g_n}= Y_{d_n}=0. $$
At each $J_n$ we associate a Bernoulli random variable $\xi_n$ which is independent from any other random variables and such that $\mathbb{P}[\xi_n=1]=\alpha$ and\\ $\mathbb{P}[\xi_n=-1]=1-\alpha$. This can be achieved by a suitable enlargement of the  probability field.
\\ Now let $Z$ be the process given by:
  $$Z_t=\sum_{n=0}^{+ \infty} \xi_n \indiq_{]g_n,d_n[}(t).$$

\begin{prop}\label{prop3}
$$Z_tY_t= \intot Z_s dY_s+(2 \alpha-1)L_t^{0}(ZY)$$
\end{prop}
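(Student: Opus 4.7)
The plan is to apply the Balayage formula to $Y$ with a bounded progressive process $k$ encoding the excursion signs $\xi_n$, and then identify the correction term with $(2\alpha-1)L_t^0(ZY)$. First, I would enlarge the filtration to $\mathcal{G}_t$ so that each $\xi_n$ becomes $\mathcal{G}_{g_n}$-measurable while $Y$ remains a continuous $\mathcal{G}$-semimartingale (possible by the independence of $(\xi_n)$ from $\mathcal{F}$), and set
$$k_s := \sum_n \xi_n \indiq_{[g_n,d_n[}(s).$$
Then $k$ is bounded and $\mathcal{G}$-progressive, and $k_{\gamma_t}Y_t=Z_tY_t$ for every $t$: on $]g_n,d_n[$ one has $\gamma_t=g_n$ and $k_{\gamma_t}=\xi_n=Z_t$, while on $\{Y=0\}$ both products vanish.

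Applying Balayage produces
$$Z_tY_t = \intot {}^pk_{\gamma_s}\,dY_s + R_t,$$
where $R$ is continuous, adapted, of bounded variation, and $dR$ is carried by $\{Y=0\}$. On each open excursion $\xi_n$ is already $\mathcal{G}_{s-}$-measurable (since $g_n<s$), so ${}^pk_{\gamma_s}=Z_s$; on the zero set the projection can differ from $Z_s$ only on the countable set $\{g_n\}$, which has zero $dY$-measure. Hence $\intot{}^pk_{\gamma_s}\,dY_s=\intot Z_s\,dY_s$ and we obtain the decomposition $Z_tY_t=\intot Z_s\,dY_s+R_t$.

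To identify $R_t$ with $(2\alpha-1)L_t^0(ZY)$, I would first show $L_t^0(ZY)=L_t^0(Y)$: the identity $|ZY|=|Y|$ holds because $|Z|=1$ on excursions, and applying Tanaka to both $ZY$ and $Y$ while noting $\sg(ZY)Z=\sg(Y)$ on excursions and $\int\sg(ZY)\,dR=0$ (since $dR$ lives on $\{ZY=0\}$) yields the equality of local times. Next, apply the same Balayage scheme a second time with the reference process $k'_s=\indiq_{[g_n,d_n[}(s)$: this produces $Y_t=\intot\indiq_{\{Y_s\neq 0\}}\,dY_s + R'_t$, where $R'_t$ is identified with $L_t^0(Y)$ as the unique continuous finite-variation piece supported on $\{Y=0\}$ associated with the reflected semimartingale $Y$. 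By linearity of Balayage in $k$ and the decomposition $\xi_n=(2\alpha-1)+(\xi_n-(2\alpha-1))$, one writes $R_t=(2\alpha-1)R'_t+\widetilde R_t$, where $\widetilde R$ is the Balayage correction associated with the centered family.

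The main obstacle is to show $\widetilde R_t\equiv 0$: the centered variables $\xi_n-(2\alpha-1)$ are i.i.d., mean zero, and independent of $\mathcal{F}$, so the associated process $(Z_s-(2\alpha-1)\indiq_{\{Y_s\neq 0\}})Y_s$ should be a $\mathcal{G}$-local martingale, and its continuous finite-variation piece $\widetilde R$ must then vanish by uniqueness of the Doob--Meyer decomposition. Combined with $L_t^0(Y)=L_t^0(ZY)$, this gives $R_t=(2\alpha-1)L_t^0(ZY)$ and concludes the proof.
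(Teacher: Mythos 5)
Your first half coincides with the paper's argument: the enlargement making $k_s=\sum_n\xi_n\indiq_{[g_n,d_n[}(s)$ progressive, the identity $k_{\gamma_t}Y_t=Z_tY_t$, the Balayage formula, and the identification ${}^pk_{\gamma_s}=Z_s$ up to a $dY$-negligible set are exactly the paper's steps. Your intermediate claims (a)--(b) are also sound in the intended setting where $Y\ge 0$: for a nonnegative semimartingale, Tanaka alone gives $Y_t=\intot \indiq_{\{Y_s>0\}}\,dY_s+L_t^0(Y)$, so $R'=L^0(Y)$, and $L^0(ZY)=L^0(|ZY|)=L^0(Y)$ since $|ZY|=Y$.

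The genuine gap is your final step. The process $\bigl(Z_t-(2\alpha-1)\indiq_{\{Y_t\neq 0\}}\bigr)Y_t$ is \emph{not} a $\mathcal{G}$-local martingale in general, because $Y$ carries a finite-variation part. Take $\alpha=\tfrac12$ and $Y$ the Skorokhod reflection of $U_t=t+W_t$: the proposition itself then gives $Z_tY_t=\intot Z_s\,dY_s=\intot Z_s\,dU_s$ (the reflection term is carried by $\{Y=0\}$, where $Z=0$), and this has the nonvanishing drift $\intot Z_s\,ds$, of total variation $t$. So the Doob--Meyer uniqueness argument cannot be applied to that process. What your scheme actually requires is that the Balayage remainder $\widetilde R$ itself --- already known to be continuous and of finite variation --- be a local martingale, whence $\widetilde R\equiv 0$; but that martingale property is precisely the substance of the proposition, and independence of the centered signs from $\mathcal{F}$ does not deliver it for free: the times $g_n$ at which the signs are revealed are not stopping times, and $\widetilde R$ arises only as a pathwise limit, so no standard optional-sampling or projection theorem applies directly. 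The paper fills exactly this hole by a quantitative computation: it approximates $Z$ by a process $Z^\epsilon$ constant on the upcrossing intervals $(\tau^\epsilon_{2k+1},\tau^\epsilon_{2k+2}]$, computes $\intot Y_s\,dZ^\epsilon_s=\epsilon\sum_{l\le N(t,\epsilon)}\xi_{k_l}$, and combines L\'evy's theorem $\epsilon N(t,\epsilon)\to\tfrac12 L_t^0(|Y|)$ with the Bernoulli law of large numbers $N^{-1}\sum_{l}\xi_{k_l}\to 2\alpha-1$. Observe that proving $\widetilde R\equiv 0$ in your centered decomposition amounts to showing $\epsilon\sum_{l\le N(t,\epsilon)}\bigl(\xi_{k_l}-(2\alpha-1)\bigr)\to 0$, which is the same law-of-large-numbers computation you were hoping to bypass; so either you carry out that estimate (recovering the paper's proof), or you must supply an independent proof that $\widetilde R$ is a local martingale, which your proposal does not contain.
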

\begin{proof}
To show this proposition, we use the Balayage formula stated in the first part of this paper. First, let us define a process $k$ by :
$$k_t=\sum_{n=0}^{+ \infty} \xi_n \indiq_{[g_n,d_n[}(t).$$
It is obvious  that $ k$ is progressive and bounded. On other hand, we remark that
$$Z_tY_t=k_{\gamma_t}. Y_t,$$ thanks to  the Balayage formula, we have:
\begin{eqnarray}\label{eq3}
Z_tY_t=k_{\gamma_t}. Y_t= \intot \;^p(k_{\gamma_s})\;dY_s+R_t.
\end{eqnarray}
Using the definition of $k$, it is clear that  $^p(k_{\gamma_s})=^p(k_{s})=k_\sm=Z_s$.
Thus, (\ref{eq3}) has the form:
\begin{eqnarray}
Z_tY_t= \intot Z_s\;dY_s+R_t.
\end{eqnarray}
To identify the process $R$, we use a standard approximation of the process
$Z_tY_t- \intot Z_sdY_s.$
For $\epsilon>0$, let us define the following sequence of stopping times:
\begin{eqnarray*}
\tau_0^\epsilon&=&0\\
\tau_{2k+1}^\epsilon&=&\inf\{t> \tau_{2k}^\epsilon\;:\; |Y_t|>\epsilon\}\;\;\;\; k=0,1,2,...\\
\tau_{2k+2}^\epsilon&=&\inf\{t> \tau_{2k+1}^\epsilon\;:\; |Y_t|=0\}\;\;\;\; k=0,1,2,...
\end{eqnarray*}
Put,
$$Z_t^{\epsilon}=\sum_{k=0}^{+ \infty}Z_t \indiq_{(\tau_{2k+1}^\epsilon,\tau_{2k+2}^\epsilon]}(t).$$
$Z^{\epsilon}$ is constant on every random interval of the form $(\tau_{2k+1}^\epsilon,\tau_{2k+2}^\epsilon]$. The continuity of $Y$ ensures that $Z^{\epsilon}$ is of bounded variation on every compact interval. Hence, $Y$ is Riemann-Stieltjes integrable with respect to $Z^{\epsilon}$ almost surely and
\begin{eqnarray}
Z_t^{\epsilon}Y_t-Z_0^{\epsilon}Y_0- \intot Z_s^{\epsilon}\;dY_s= \intot Y_s\;d Z_s^{\epsilon}.
\end{eqnarray}
As $\epsilon \rightarrow 0$ we have that $Z_t^{\epsilon}\rightarrow Z_t$ for all $t$ almost surely. Since $|Z^{\epsilon}| \leq 1$ the convergence of $Z^{\epsilon}$ implies as well that
$$\intot Z_s^{\epsilon}\;dY_s\rightarrow \intot Z_s\;dY_s$$
in probability for all $t$. The definition of $Z^{\epsilon}$ entails that,
\begin{eqnarray*}
\intot Y_s \;dZ_s^{\epsilon}= \sum_{\stackrel{k}{\tau_{2k+1}<t}}Y_{\tau_{2k+1}^\epsilon} Z_{\tau_{2k+1}^\epsilon}=\epsilon   \sum_{\stackrel{k}{\tau_{2k+1}<t}}Z_{\tau_{2k+1}^\epsilon}\\
\end{eqnarray*}

Let $N(t,\epsilon)$ be the number of upcrossing of the interval $[0,\epsilon]$. So,
$$\intot Y_s \;dZ_s^{\epsilon}=\epsilon \sum_{l=1}^{N(t, \epsilon)}\xi_{k_l}$$
where $\{\xi_{k_l},\;\; l=0,1,...,N(t,\epsilon)\}$ is an enumeration of the $Z_{\tau_{2k+1}}^{\epsilon}$ values.
By a Lévy's résult \cite{levy}, we have $\lim_{\epsilon \rightarrow 0} \epsilon N(t, \epsilon)=\frac{1}{2}L_t^0(|Y|)$. So,
we can apply the Bernoulli law of large numbers with  to get:
$$\epsilon N(t, \epsilon)\frac{ \sum_{l=1}^{N(t, \epsilon)}\xi_{k_l}}{N(t, \epsilon)}\rightarrow \frac{1}{2}L_t^{0}(|Y|).\mathbb{E}[\xi_1]$$

Since $\mathbb{E}[\xi_1]=2 \alpha -1$, we obtain:
$$ \intot Y_s\;d Z_s^{\epsilon} \rightarrow (2 \alpha-1)\frac{1}{2}L_t^{0}(|Y|),$$
since $Z$ has values in the set $\{-1,1\}$, It is clear that
$$\frac{1}{2}L_t^{0}(|Y|)=\frac{1}{2}L_t^{0}(|ZY|)=L_t^{0}(ZY).$$
Hence the result.
\end{proof}
\bigskip
\subsection{Construction of continuous martingale with given absolute value}
We devote this subsection to an application of Proposition \ref{prop3}. For this, we will first need to introduce a special class of submartingales whose introduction goes back to Yor \cite{sigma}.
\begin{defn}
 Let $(\Omega, \mathcal{F},(\mathcal{F}_t), \mathbb{P})$ be a filtered probability space. A nonnegative (local) submartingale $(X_t)_{t \geq 0}$ is of class $\Sigma$, if it can be decomposed as \\
 $X_t=N_t+A_t$ where $(N_t)_{t \geq 0}$ and $(A_t)_{t \geq 0}$ are $(\mathcal{F}_t)-$adapted processes satisfying the following assumptions:
 \begin{itemize}
   \item $(N_t)_{t \geq 0}$ is a continuous (local) martingale.
   \item $(A_t)_{t \geq 0}$ is continuous increasing process, with $A_0=0$.
   \item The measure $(dA_t)$ is carried by the set $\{t \geq 0, X_t=0\}$
 \end{itemize}
\end{defn}
The class $(\Sigma)$ contains many well-known examples of stochastic processes such as a nonnegative local martingales, $|M_t|$, $M_t^+$, $M_t^-$ if $M$ is a continuous local martingale. Other remarkable families of examples consist of a large class of recurrent diffusions  on natural scale (such as some powers of Bessel processes of dimension $\delta \in (0,2)$, see \cite{nike}) or of a function of a symmetric Lévy processes; in these cases, $A_t$ is the local time of the diffusion process or of the Lévy process.
\bigskip
\\
In  \cite{gilat}, Gilat proved that every nonnegative submartingale $Y$ is equal in law to the absolute value of a Martingale M. His construction, however, did not shed any light on the nature of this martingale. In the case when $X$ is of class $\Sigma$, one could deduce more interesting result from Proposition \ref{prop3}. The proof is very simple in this case and we give it.
\begin{prop}
If $X \in \Sigma $, then there exists a continuous martingale $M$ such that $X=|M|$
\end{prop}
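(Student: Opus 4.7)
The plan is to apply Proposition \ref{prop3} to the semimartingale $Y=X$ with the symmetric choice $\alpha=1/2$, so that the local-time term vanishes and the balayage identity produces a true martingale candidate. Concretely, I would decompose $\mathbb{R}^+\setminus\{t\ge 0:X_t=0\}$ into the countable union of open excursion intervals $]g_n,d_n[$, attach i.i.d.\ symmetric Bernoulli signs $(\xi_n)$ (with $\mathbb{P}[\xi_n=1]=\mathbb{P}[\xi_n=-1]=1/2$) after a suitable enlargement of the probability space, and set
$$Z_t=\sum_{n=0}^{+\infty}\xi_n\,\indiq_{]g_n,d_n[}(t),\qquad M_t:=Z_tX_t.$$

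Next, since $X=N+A$ is a continuous semimartingale, Proposition \ref{prop3} with $\alpha=1/2$ yields
$$Z_tX_t=\intot Z_s\,dX_s+(2\alpha-1)L_t^{0}(ZX)=\intot Z_s\,dN_s+\intot Z_s\,dA_s,$$
because the local-time coefficient vanishes. The crucial observation, and in some sense the only non-trivial point of the argument, is that $\intot Z_s\,dA_s=0$: by construction $Z$ is supported on the \emph{open} excursion intervals and is therefore identically zero on the closed set $\{X=0\}$, while the defining property of the class $\Sigma$ is precisely that $dA$ is carried by $\{X=0\}$. Consequently $M_t=\intot Z_s\,dN_s$ is a continuous local martingale.

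It remains to check that $|M|=X$. Whenever $X_t=0$ we have $M_t=Z_tX_t=0$, so both sides are zero. Whenever $X_t>0$, the time $t$ belongs to some excursion interval $]g_n,d_n[$, hence $Z_t=\xi_n\in\{-1,+1\}$ and $|M_t|=|Z_t|X_t=X_t$. Thus $|M|=X$ almost surely.

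The main potential obstacle is the identification $\intot Z_s\,dA_s=0$; once this is seen to follow immediately from the defining property of $\Sigma$ together with the fact that the excursion-sign process is defined on \emph{open} intervals, everything else is a mechanical consequence of the balayage identity already proven in Proposition \ref{prop3}. Strictly speaking one only obtains a continuous local martingale $M$; promoting it to a genuine martingale (when $X$ itself is a submartingale of class (D), say) is then a routine localization/uniform integrability argument that need not detain us here.
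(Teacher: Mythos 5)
Your proposal is correct and follows essentially the same route as the paper: apply Proposition \ref{prop3} with $\alpha=\frac{1}{2}$ so the local-time term drops out, kill $\intot Z_s\,dA_s$ using the fact that $dA$ is carried by $\{X=0\}$ while $Z$ vanishes there, and set $M=ZX$. Your explicit case-check that $|M|=X$ and your remark that one strictly obtains only a continuous \emph{local} martingale are welcome refinements of details the paper passes over silently, but they do not change the argument.
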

\begin{proof}
In this proof, we use the same notations as above, let $\alpha= \frac{1}{2}$, by proposition \ref{prop3}, we have:
\begin{eqnarray*}
Z_t X_t = \intot Z_s dX_s= \intot Z_s dN_s+ \intot Z_s dA_s
\end{eqnarray*}
By this argument of the support $supp \;dA_s \subset \{X_s=0\}$, it is clear that
$ \intot Z_s dA_s=0$, putting $M_t=Z_t X_t $, since $Z$ has values in $\{-1,1\}$, we get
$X_t= |M_t|$
\end{proof}


\bigskip

\subsection{The case where $\alpha$ is a piecwise constant function }\label{notation}
Thinking of the case of he proposition \ref{prop3} in which the parameter $\alpha$ is constant, we will content ourselves to find the analogous of this result for a piecewise constant function $\alpha$.
\\

Let $\{\pi: 0=t_0<t_1...<t_i...<t_m=1\}$ be a partition of the interval $[0,1]$.\\
Let $\alpha:[0,1] \rightarrow [-1,1]$  be a r.c.l.l function with constant value in each interval $[t_i,t_{i+1})$. So, $\alpha$ has the form $\alpha(t)= \sum_{i=0}^m \alpha_i\indiq_{\{[t_i,t_{i+1})\}}$  where $\alpha_i \in [0,1]\; $ for all $i=0,...,m$.
\\

Let $ (\xi_n^i)_{n \geq 0}$, $i=0,1,2,...m$ be m independent sequences of independent variables such that,
$\mathbb{P}(\xi_n^i=1)= \alpha(t_i)$ and $\mathbb{P}(\xi_n^i=-1)= 1-\alpha(t_i)$.
\\

Now, put
$$Z_t=\sum_{n=0}^{+ \infty}\sum_{i=0}^{m} \xi_n^i \indiq_{]g_n,d_n[\bigcap [t_i,t_{i+1})}(t).$$
\begin{prop}\label{prop4}
$$Z_tY_t= \intot Z_s dY_s+\intot(2 \alpha(s)-1)dL_s^{0}(ZY)$$
\end{prop}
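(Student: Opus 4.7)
The plan is to reduce the claim to repeated applications of Proposition~\ref{prop3}, one on each sub-interval $[t_i,t_{i+1})$ where $\alpha$ is constant. For each $i\in\{0,\dots,m\}$, I would introduce the auxiliary process
\[
Z^{(i)}_t := \sum_{n\geq 0} \xi_n^i\, \indiq_{]g_n,d_n[}(t),
\]
which is exactly the process of Proposition~\ref{prop3} built from the i.i.d.\ Bernoulli sequence $(\xi_n^i)_n$ of parameter $\alpha_i$. By direct inspection of the definition of $Z$, one has $Z_s=Z^{(i)}_s$ for every $s\in[t_i,t_{i+1})$.

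First, I would apply Proposition~\ref{prop3} to each $Z^{(i)}$ to obtain, for every $t\geq 0$,
\[
Z^{(i)}_t Y_t \;=\; \int_0^t Z^{(i)}_s\,dY_s \;+\; (2\alpha_i-1)\,L^0_t(Z^{(i)} Y).
\]
Since both $Z^{(i)}$ and $Z$ take values in $\{-1,+1\}$ off the closed set $\{Y=0\}$, the identification $|Z^{(i)}Y|=|ZY|=|Y|$ combined with the equality $L^0(\cdot)=\tfrac12 L^0(|\cdot|)$ exploited in the proof of Proposition~\ref{prop3} gives $L^0_t(Z^{(i)}Y)=L^0_t(ZY)$ for every $i$; in particular, the local-time term on the right-hand side above does not depend on $i$.

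Next, for $t\in[t_k,t_{k+1})$, I would write the above identity in increment form between $t_i$ and $t\wedge t_{i+1}$ and sum over $i=0,\dots,k$. The stochastic integrals assemble into $\int_0^t Z_s\,dY_s$ thanks to $Z_s=Z^{(i)}_s$ on $[t_i,t_{i+1})$, while the local-time contributions combine into $\int_0^t(2\alpha(s)-1)\,dL^0_s(ZY)$ since $\alpha(s)=\alpha_i$ on $[t_i,t_{i+1})$. Using $Y_0=0$, the left-hand side telescopes to $Z_t Y_t$.

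The main obstacle I expect is the behaviour at the partition points $t_i$: the telescoping requires matching the boundary values produced by consecutive sub-interval identities, which involve different Bernoulli families $(\xi_n^{i-1})_n$ and $(\xi_n^i)_n$. Handling this will rely on the enlargement of the probability space carrying all the $(\xi_n^i)_{n,i}$ so that, on each sub-interval, Proposition~\ref{prop3} applies with $(\xi_n^i)_n$ independent of the past up to time $t_i$, together with the fact that the local time $L^0(ZY)$ is unaffected by the sign switch at partition times.
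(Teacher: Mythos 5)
Your reduction strategy founders at exactly the point you flag, and the obstacle is fatal rather than merely technical. Applying Proposition~\ref{prop3} to each $Z^{(i)}$ and summing the increments over $[t_i,t\wedge t_{i+1}]$ produces an \emph{exact} identity whose left-hand side telescopes to
\[
Z_tY_t+\sum_{i}\bigl(Z^{(i)}_{t_{i+1}}-Z^{(i+1)}_{t_{i+1}}\bigr)Y_{t_{i+1}},
\]
not to $Z_tY_t$. Whenever $Y_{t_{i+1}}\neq 0$, the time $t_{i+1}$ lies inside some excursion $]g_n,d_n[$, and then $Z^{(i)}_{t_{i+1}}=\xi_n^{i}$ while $Z^{(i+1)}_{t_{i+1}}=\xi_n^{i+1}$: these come from \emph{independent} Bernoulli families, so the mismatch equals $\pm 2Y_{t_{i+1}}$ with probability $\alpha_i(1-\alpha_{i+1})+(1-\alpha_i)\alpha_{i+1}>0$. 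No enlargement of the probability space can remove this --- the families are independent by construction --- and your appeal to the insensitivity of $L^0(ZY)$ to sign switches concerns the local-time term on the right-hand side, not these uncancelled boundary terms on the left. The terms are moreover real, not an artifact: with $Z$ read literally as in Subsection~\ref{notation}, the product $ZY$ jumps at any partition time falling inside an excursion, so it cannot equal the continuous right-hand side; the proposition must be read with the sign held constant along each excursion (determined by the sub-interval containing $g_n$), and for that corrected $Z$ your identification $Z_s=Z^{(i)}_s$ on $[t_i,t_{i+1})$ fails precisely on the excursion straddling $t_i$. Either way the gluing step breaks.

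The paper's proof avoids the gluing problem entirely: it applies the balayage formula \emph{once}, to the single progressive process $k$ assembled from all the families, which yields $Z_tY_t=\int_0^t Z_s\,dY_s+A_t$ in one stroke (and, in passing, the semimartingale property of $ZY$, which your step $L^0(Z^{(i)}Y)=L^0(ZY)$ silently presupposes --- in your route $L^0(ZY)$ is not yet defined when you invoke it). The bounded-variation part $A$ is then identified by the same upcrossing approximation as in Proposition~\ref{prop3}, run over all sub-intervals simultaneously: there, an excursion straddling a partition point contributes at most one mislabelled upcrossing per $t_i$, hence at most $m$ anomalous terms each multiplied by $\epsilon$, and these vanish as $\epsilon\to 0$ before the law of large numbers is applied on each $[t_i,t_{i+1}]$. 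That is the structural difference between the two routes: in the paper the boundary effect is $O(m\epsilon)$ inside a limit, whereas in your exact telescoping it survives as a genuinely nonzero random variable. Repairing your approach would require an increment form of Proposition~\ref{prop3} on $[t_i,t_{i+1}]$ valid for a sign process that retains the straddling excursion's old sign, and proving that amounts to rerunning the balayage-plus-upcrossing argument, i.e.\ the paper's proof.
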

\begin{proof}
The proof follows the same line as the proof of proposition \ref{prop3}. For a sake of completeness, we give again a proof for the general case.\\
To use the Balayage formula, we must make a suitable choice of $k$. Let $k$ be the process defined by
$$k_t=\sum_{n=0}^{+ \infty}\sum_{i=0}^{m} \xi_n \indiq_{[g_n,d_n[\bigcap [t_i,t_{i+1})}(t).$$
This definition is in fact quite intuitive.
It is obvious  that $ k$ is progressive and bounded. On other hand, we remark that
$$Z_tY_t=k_{\gamma_t}. Y_t,$$ ,as in Proof of the first Proposition, it suffices to apply Balayage formula to get
\begin{eqnarray}
Z_tY_t= \intot Z_s\;dY_s+A_t.
\end{eqnarray}
to identify the process $A$ we use a standard approximation of the process
$Z_tY_t- \intot Z_sdY_s$ by using the same sequence of stopping times as in the proof of proposition \ref{prop3}.
Put,
$$Z_t^{\epsilon}=\sum_{i=0}^{m}\sum_{\stackrel{k=0}{t_i<\tau_{2k+1}<t_{i+1}}}^{+ \infty}\xi_k^{i} \indiq_{(\tau_{2k+1}^\epsilon,\tau_{2k+2}^\epsilon]}(t).$$
$Z^{\epsilon}$ is constant on every random interval of the form $(\tau_{2k+1}^\epsilon,\tau_{2k+2}^\epsilon]$. The continuity of $Y$ ensures that $Z^{\epsilon}$ is of bounded variation on every compact interval. Hence, $Y$ is Riemann-Stieltjes integrable with respect to $Z^{\epsilon}$ almost surely and
\begin{eqnarray}
Z_t^{\epsilon}Y_t-Z_0^{\epsilon}Y_0- \intot Z_s^{\epsilon}\;dY_s= \intot Y_s\;d Z_s^{\epsilon}.
\end{eqnarray}
As $\epsilon \rightarrow 0$ we have that $Z_t^{\epsilon}\rightarrow Z_t$ for all $t$ almost surely. Thus
$$\intot Z_s^\epsilon \;dY_s \rightarrow \intot Z_s \;dY_s$$
By the definition of $Z^{\epsilon}$,
\begin{eqnarray*}
\intot Y_s \;dZ_s^{\epsilon}= \sum_{i=0}^{m}\sum_{\stackrel{k=0}{t_i<\tau_{2k+1}<t_{i+1}}}^{+ \infty}Y_{\tau_{2k+1}^\epsilon} Z_{\tau_{2k+1}^\epsilon}=\epsilon  \sum_{i=0}^{m}\sum_{\stackrel{k=0}{t_i<\tau_{2k+1}<t_{i+1}}}^{+ \infty}Z_{\tau_{2k+1}^\epsilon}\\
\end{eqnarray*}

Let $N(t_i,t_{i+1},\epsilon)$ be the number of upcrossing of the interval $[0,\epsilon]$ between $t_i$ and $t_{i+1}$. So,
$$\intot Y_s \;dZ_s^{\epsilon}=\epsilon \sum_{i=0}^{m}\sum_{l=1}^{N(t_i,t_{i+1},\epsilon)}\xi_{k_l}^{i}$$
where $\{\xi_{k_l}^{i},\;\; l=0,1,...,N(t_i,t_{i+1},\epsilon)\}$ is an enumeration of the $Z_{\tau_{2k+1}}^{\epsilon}$ values between $t_i$ and $t_{i+1}$.
Thus,
\begin{eqnarray*}
\intot Y_s\;dZ_s^{\epsilon}&=& \epsilon\sum_{i=0}^{m}\left[\sum_{l=1}^{N(0,t_{i+1},\epsilon)}\xi_{k_l}^{i}-\sum_{l=1}^{N(0,t_{i+1},\epsilon)}\xi_{k_l}^{i}\right]
\end{eqnarray*}
We can apply the Bernoulli law of large numbers to get:
$$\epsilon N(0,t_{i+1} ,\epsilon)\frac{ \sum_{l=1}^{N(0,t_{i+1},\epsilon)}\xi_{k_l}^i}{N(0,t_{i}, \epsilon)}\rightarrow\frac{1}{2} L_{t_{i+1}}^{0}(|Y|).\mathbb{E}[\xi_1^i]\;\;\;\;\; \forall i=0,1,...,m-1$$
and by the same arguments:
$$\epsilon N(0,t_{i} ,\epsilon)\frac{ \sum_{l=1}^{N(0,t_{i} \epsilon)}\xi_{k_l}^i}{N(0,t_{i}, \epsilon)}\rightarrow \frac{1}{2} L_{t_i}^{0}(|Y|).\mathbb{E}[\xi_1^i]\;\;\;\;\; \forall i=1,2,...,m$$

Since $\mathbb{E}[\xi_1^i]=2 \alpha(t_i) -1$, we get:

$$ \intot Y_s\;d Z_s^{\epsilon}\rightarrow \frac{1}{2}\sum_{i=1}^{m}(2\alpha(t_i)-1)(L_{t_{i+1}}(|Y|)-L_{t_{i}}(|Y|)) =\frac{1}{2}\intot (2 \alpha(s)-1)dL_s^{0}(|Y|),$$
cleary this implies
$$\intot Y_s\;d Z_s^{\epsilon} \rightarrow \intot (2 \alpha(s)-1)dL_s^{0}(ZY).$$
Hence the result.
\end{proof}

\section{Construction of the inhomogenous SBM}\label{Construction}
\subsection{Construction of the inhomogenous SBM with the piecewise constant coefficient $\alpha$}\label{Construction}
In this section, we give a construction of a weak solution of (\ref{eqpr}) obtained by an application of the Proposition \ref{prop4}. We use the same notations and assumptions of the subsection \ref{notation}.\\
The construction is the following: Let $B$ a standard $(\mathcal{F}_t)-$Brownian motion. We define a process $X^\alpha$ by putting
\begin{eqnarray}
\forall t \geq 0\;\;\;\; X_t^{\alpha}(\omega)=Z_t(\omega). |B_t(\omega)|,
\end{eqnarray}
where the process $Z$ is defined by $Z_t=\sum_{n=0}^{+ \infty}\sum_{i=0}^{m} \xi_n^i \indiq_{]g_n,d_n[\bigcap [t_i,t_{i+1})}(t)$, where $\{]g_n,d_n[\}$ is a countable unions of disjoint intervals which covers the set
\\ $\{s \geq 0, \; B_s \neq 0\}$. Consequently, we have the following theorem

\begin{thm}
The process $X^{\alpha}$ is a weak solution of (\ref{eqpr}) with parameter $\alpha$ and starting from $0$.
\end{thm}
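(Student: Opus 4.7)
\emph{Proof plan.} The plan is to apply Proposition~\ref{prop4} to the reflected Brownian motion $Y_t := |B_t|$, which is a continuous semimartingale starting from $0$ whose excursion intervals $]g_n,d_n[$ coincide with those of $B$; hence the process $Z$ in the construction of $X^{\alpha}$ is precisely the process attached to $Y$ by Proposition~\ref{prop4}. Tanaka's formula gives
\begin{equation*}
|B_t| = \beta_t + L_t^0(B), \qquad \beta_t := \int_0^t \sg(B_s)\,dB_s,
\end{equation*}
so that $d|B|_s = \sg(B_s)\,dB_s + dL_s^0(B)$. Plugging this into Proposition~\ref{prop4} with $Y=|B|$, and recalling that $ZY=X^{\alpha}$, yields
\begin{equation*}
X_t^{\alpha} \;=\; \int_0^t Z_s\sg(B_s)\,dB_s \;+\; \int_0^t Z_s\,dL_s^0(B) \;+\; \int_0^t (2\alpha(s)-1)\,dL_s^0(X^{\alpha}).
\end{equation*}

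The middle integral vanishes: the measure $dL_s^0(B)$ is carried by $\{B_s=0\}=\bigl(\bigcup_n ]g_n,d_n[\bigr)^{c}$, on which $Z_s\equiv 0$ by construction. To handle the first integral, set $W_t := \int_0^t Z_s\sg(B_s)\,dB_s$. The integrand $Z_s\sg(B_s)$ takes values in $\{-1,+1\}$ off the Lebesgue-negligible zero set of $B$, so $W$ is a continuous local martingale with $\langle W\rangle_t = t$; by Lévy's characterization (in the enlarged filtration carrying the $\xi_n^i$'s), $W$ is a standard Brownian motion. It remains to match the local time terms: because $|X_s^{\alpha}|=|Z_s|\,|B_s|=|B_s|$ (the factor $|Z_s|$ equals $1$ off $\{B_s=0\}$), the occupation time formula combined with $\langle X^{\alpha}\rangle_s=s=\langle B\rangle_s$ forces $L_t^0(X^{\alpha})=L_t^0(|B|)=L_t^0(B)$ in the symmetric sense. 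Putting everything together,
\begin{equation*}
X_t^{\alpha} \;=\; W_t \;+\; \int_0^t (2\alpha(s)-1)\,dL_s^0(X^{\alpha}),
\end{equation*}
which is exactly~(\ref{eqpr}) with starting point $0$ and driving Brownian motion $W$, so $(X^{\alpha},W)$ is a weak solution.

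The main technical obstacle is the careful handling of the two ``boundary'' contributions: verifying that $Z$ vanishes $dL^0(B)$-almost everywhere (so the $Z_s\,dL_s^0(B)$ term disappears), and identifying the symmetric local times of $B$, $|B|$, and $X^{\alpha}$. Both rely on the fact that $|X^{\alpha}|=|B|$ and that $Z\in\{-1,+1\}$ exactly on the excursion intervals of $B$; once these points are settled, Proposition~\ref{prop4} and Lévy's theorem do the remaining work.
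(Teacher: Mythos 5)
Your proposal is correct and follows essentially the same route as the paper: apply Proposition~\ref{prop4} with $Y=|B|$, expand $d|B|_s$ by Tanaka's formula, kill the $\int Z_s\,dL_s^0(B)$ term since $dL^0(B)$ is carried by $\{B_s=0\}$ where $Z$ vanishes, and identify $\int_0^t Z_s\,\sg(B_s)\,dB_s$ as a Brownian motion via L\'evy's characterization because its quadratic variation is $t$. Your extra step matching $L^0(X^{\alpha})$ with $L^0(B)$ is harmless but unnecessary, since Proposition~\ref{prop4} already produces the local time of $ZY=X^{\alpha}$ itself, which is exactly what equation~(\ref{eqpr}) requires.
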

\begin{proof}
By Proposition \ref{prop4},
$$X_t^{\alpha}=\intot Z_s\;d|B_s|+ \intot (2 \alpha(s)-1) dL_s^0(X^\alpha).$$
Tanaka's formula implies that:
\begin{eqnarray*}
X_t^{\alpha}&=&\intot Z_s \sg(B_s)dB_s+\intot Z_s\;dL_s^0(B) + \intot (2 \alpha(s)-1) dL_s^0(X^\alpha)\\
&=&\intot Z_s d \beta_s+  \intot (2 \alpha(s)-1) dL_s^0(X^\alpha)
\end{eqnarray*}
where $\beta= \int  sg(B_s)dB_s $ is a Browninan motion. In the last line we have used the fact that the measure $dL_s^0(B)$ is carried by the set $S=\{s\;, B_s=0\}$ and that the process $Z$ is defined  on the complementary of the set $S$,\\ hence $\intot Z_s\;dL_s^0(B)=0$.
Obviously, the process $(\intot Z_s\;d\beta_s)$ is a continuous local martingale. Since $\intot Z_s^2 ds=t$, we deduce that $(\intot Z_s\;dB_s)$ is in fact a Brownian motion, ensuring that $X^\alpha$ satisfies (\ref{eqpr}).
\end{proof}
\subsection{Construction of ISBM with a borel function $\alpha$}
Now, as a natural extension of the construction in the last subsection, we have the following theorem
\begin{thm}\label{thm}
Let $\alpha: \mathbb{R}^{+}\rightarrow [0,1]$ a Borel function and $B$ a standard Brownian motion. For any fixed $x \in \mathbb{R}$, there exists a unique (strong) solution to (\ref{eqpr}). It is a strong Markov process with transition function $p^\alpha(s,t,x,y)$ .
\end{thm}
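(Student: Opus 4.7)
My approach is to obtain $X^{\alpha}$ as the limit of the piecewise-constant-coefficient solutions built in the previous subsection, and then appeal to the Yamada--Watanabe theorem. Pick a sequence $(\alpha_n)$ of $[0,1]$-valued piecewise constant functions on $\mathbb{R}^+$ (e.g.\ dyadic step approximations of $\alpha$) with $\alpha_n(s) \to \alpha(s)$ for Lebesgue-a.e.\ $s$, which is possible since $\alpha$ is bounded and Borel. For each $n$, the construction of the preceding subsection yields a weak solution $X^{\alpha_n}$ of (\ref{eqpr}) with coefficient $\alpha_n$, driven by some Brownian motion $B^n$. For starting point $x\ne 0$, the Balayage construction is run by replacing $|B^n_t|$ with the reflection of a Brownian motion started at $x$: before the first zero the process coincides with the driving Brownian motion, and thereafter the earlier construction applies verbatim.

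The uniform Kolmogorov estimate of Proposition~\ref{kolmo} applies to every $X^{\alpha_n}$ with the same universal constant $C$, so the family of joint laws of $(X^{\alpha_n}, B^n)$ on $C(\mathbb{R}^+,\mathbb{R})^2$ is tight. By Skorokhod's representation theorem, a subsequence can be realized on a common probability space on which $(X^{\alpha_n}, B^n) \to (X, B)$ almost surely, uniformly on compact intervals, with $B$ a Brownian motion. Applying Tanaka's formula to $X^{\alpha_n}$,
\[ L^0_t(X^{\alpha_n}) = |X^{\alpha_n}_t| - |x| - \intot \sg(X^{\alpha_n}_s)\,dB^n_s, \]
one then deduces uniform convergence on compacts of $L^0(X^{\alpha_n})$ to $L^0(X)$.

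The main obstacle is to pass to the limit in $\intot (2\alpha_n(s)-1)\,dL^0_s(X^{\alpha_n})$. Because the local-time measure is carried by a Lebesgue-null set, the pointwise Lebesgue-a.e.\ convergence $\alpha_n\to\alpha$ does not directly transfer to the $dL^0$-integral. The remedy is to pass to expectations: the first proposition of Section~1.1 yields $|X^{\alpha_n}| \stackrel{\mathcal{L}}{=} |B|$ under $\mathbb{P}^0$, from which $\E[dL^0_s(X^{\alpha_n})]$ is an absolutely continuous measure with density of order $1/\sqrt{s}$; dominated convergence then gives $L^1$-convergence of the integrated local-time terms, and extracting a further subsequence produces the required almost-sure convergence. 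This identifies $X$ as a weak solution of (\ref{eqpr}) with coefficient $\alpha$.

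Pathwise uniqueness (Theorem~1.1, due to Weinryb) combined with the Yamada--Watanabe theorem then upgrades the weak solution to the unique strong solution starting from $x$. The Markov property and the explicit transition kernel $p^{\alpha}(s,t,x,y)$ are then immediate from Proposition~\ref{prop-markov}. The genuine difficulty throughout this plan is the local-time convergence step; everything else is a routine combination of the results already established in the excerpt.
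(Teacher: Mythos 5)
Your skeleton coincides with the paper's up to the identification step (piecewise constant approximation of $\alpha$, tightness via Proposition \ref{kolmo}, Skorokhod representation, Tanaka's formula and convergence of the local times, then Weinryb's pathwise uniqueness plus Yamada--Watanabe), but your treatment of the crucial step --- passing to the limit in $\intot(2\alpha_n(s)-1)\,dL^0_s(X^{\alpha_n})$ --- has a genuine gap. Split the error into two pieces: $\intot(\alpha_n-\alpha)(s)\,dL^0_s(X^{\alpha_n})$ and $\intot \alpha(s)\,\bigl[dL^0_s(X^{\alpha_n})-dL^0_s(X)\bigr]$. Your expectation argument handles only the first: since $|X^{\alpha_n}|\stackrel{\mathcal{L}}{=}|B|$, one has $\E\intot|\alpha_n-\alpha|(s)\,dL^0_s(X^{\alpha_n})\leq \intot|\alpha_n-\alpha|(s)\,\tfrac{e^{-x^2/2s}}{\sqrt{2\pi s}}\,ds\rightarrow 0$ by dominated convergence. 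It does nothing for the second piece, and no dominated convergence in expectation can: by the very same identity in law, the random variables $\intot\alpha(s)\,dL^0_s(X^{\alpha_n})$ are \emph{identically distributed} for all $n$ (and equal in law to $\intot\alpha(s)\,dL^0_s(X)$), so their expectations agree for every $n$ and carry no information about convergence under the Skorokhod coupling; what you would need is $L^1$-smallness of the absolute difference, which equality of laws does not provide. Nor does the uniform convergence $L^0(X^{\alpha_n})\rightarrow L^0(X)$ help directly: it gives a.s.\ weak convergence of the (singular) measures $dL^0(X^{\alpha_n})$, which integrates continuous test functions but not a fixed bounded Borel $\alpha$ --- the portmanteau theorem would require $\alpha$ to be $dL^0(X)$-a.e.\ continuous, which there is no reason to assume. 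The gap is repairable: approximate $\alpha$ by a continuous $g$ with $\intot|\alpha-g|(s)\,s^{-1/2}ds$ small, use the density bound \emph{uniformly in $n$} (and for the limit $X$, whose modulus is also a reflected Brownian motion) to control both replacement errors, and treat the $g$-integrals by weak convergence; but this step is the actual content of your "remedy" and is missing as written.

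It is worth noting that the paper avoids this convergence-of-$dL^0$-integrals problem altogether, by a different mechanism. After establishing $\sup_{s\in[0,1]}|L^0_s(\widehat{X}^{n_k})-L^0_s(|\widehat{X}|)|\rightarrow 0$ in probability, it invokes Proposition \ref{prop-markov}: each $\widehat{X}^{n_k}$ is Markov with transition density $p^{\alpha_{n_k}}$, these kernels converge to $p^{\alpha}$, so the limit $\widehat{X}$ is Markov with kernel $p^{\alpha}$; then, following \'Etor\'e and Martinez, it computes $\E(\widehat{X}_t\mid\mathcal{F}_s)$ explicitly from the kernel and from the identity $\mathbb{E}^{\widehat{X}_s}\bigl(\int_s^t(2\alpha-1)\circ\sigma_s(u)\,dL^0_u(|\widehat{B}|)\bigr)=\int_0^{t-s}(2\alpha-1)\circ\sigma_s(u)\,\tfrac{e^{-|\widehat{X}_s|^2/2u}}{\sqrt{2\pi u}}\,du$, concluding that $\widehat{X}_t-\intot(2\alpha(u)-1)\,dL^0_u(\widehat{X})$ is a local martingale with quadratic variation $t$, hence a Brownian motion. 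In short, the paper identifies the limiting equation through the transition kernel and a martingale computation, whereas you attempt a direct limit in the local-time integral; your route can be completed, but only after supplying the continuous-approximation argument sketched above.
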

The main tool used in the proof is the Skorokhod representation theorem given by the following:
\begin{lem}
Let $(S,\rho)$ be a complete separable metric space, $\{\mathbb{P}_n\;, n \geq 1\}$ and $\mathbb{P}$ be probability measures on $(S, \mathcal{B}(S))$ such that $\mathbb{P}_n \xrightarrow{n \rightarrow+ \infty}\mathbb{P}$. Then on a probability space $(\widehat{\Omega}, \widehat{\mathcal{F}}, \widehat{\mathbb{P}})$, we construct $S-$valued random variables $X_n,$ $\;n=1,2,...$ and $X$, such that:
\begin{itemize}
  \item [(i)] $\mathbb{P}_n=\widehat{\mathbb{P}}_{X_n}\;$ $n=1,2,...$ and $\mathbb{P}=\widehat{\mathbb{P}}_X\;$
  \item [(ii)] $X_n$ converge to $X$, $\widehat{\mathbb{P}}$ almost surely.
\end{itemize}
\end{lem}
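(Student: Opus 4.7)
The plan is to realise the coupling on the canonical product space $(\widehat{\Omega},\widehat{\mathcal{F}},\widehat{\mathbb{P}})=([0,1]^{\mathbb{N}},\mathcal{B},\lambda^{\otimes\mathbb{N}})$ with independent uniform coordinates $(\omega_0,\omega_1,\omega_2,\ldots)$, via the nested-partition coupling scheme of Skorokhod and Dudley.

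For each $k\geq 1$ I first build a finite Borel partition $\pi_k=\{A_{k,1},\ldots,A_{k,m_k}\}$ of $S$ with $\operatorname{diam}A_{k,j}\leq 2^{-k}$, $\mathbb{P}(\partial A_{k,j})=0$ for every cell, and $\pi_{k+1}$ refining $\pi_k$. Polishness gives tightness and separability: cover a compact subset of $\mathbb{P}$-mass $\geq 1-2^{-k}$ by finitely many balls of radius $2^{-k-1}$, perturbing radii so that every boundary is $\mathbb{P}$-null (at most countably many spheres around a fixed centre can carry positive $\mathbb{P}$-mass), then intersect with the previous level to preserve nesting. Fix representatives $x_{k,j}\in A_{k,j}$. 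Using the coordinate $\omega_0$, subdivide $[0,1]$ into consecutive intervals $I_{k,j}$ of length $\mathbb{P}(A_{k,j})$ compatibly with the nesting, and define $X(\omega_0):=\lim_{k}x_{k,j_k(\omega_0)}$ where $\omega_0\in I_{k,j_k(\omega_0)}$; completeness of $S$ combined with diameter decay makes the limit exist $\widehat{\mathbb{P}}$-a.s., and verification on the generating $\pi_k$'s yields $\widehat{\mathbb{P}}_X=\mathbb{P}$.

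By the Portmanteau theorem (valid because every $\partial A_{k,j}$ is $\mathbb{P}$-null), $\mathrm{TV}_k(n):=\tfrac12\sum_j|\mathbb{P}_n(A_{k,j})-\mathbb{P}(A_{k,j})|\to 0$ as $n\to\infty$ for each fixed $k$. Define $k(n)$ as the largest $k$ with $\mathrm{TV}_k(n)\leq 1/n^2$; then $k(n)\uparrow\infty$. For each $n$ I construct $X_n$ as a function of $(\omega_0,\omega_n)$ by applying the total-variation-optimal discrete coupling between the level-$k(n)$ cell-weights of $\mathbb{P}$ and $\mathbb{P}_n$: with probability $1-\mathrm{TV}_{k(n)}(n)$ place $X_n$ in the same level-$k(n)$ cell as $X$ and use $\omega_n$ to sample within the cell from $\mathbb{P}_n$ conditioned on the cell; otherwise sample $X_n$ from the $\mathbb{P}_n$-"excess" distribution on the non-overlapping cells, again using $\omega_n$. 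This yields $\widehat{\mathbb{P}}_{X_n}=\mathbb{P}_n$.

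On the "matched cell" event both $X(\omega)$ and $X_n(\omega)$ lie in one cell of diameter $\leq 2^{-k(n)}$, so $\rho(X_n,X)\leq 2^{-k(n)}$; the complementary mismatch event has probability exactly $\mathrm{TV}_{k(n)}(n)\leq 1/n^2$, summable in $n$. Borel--Cantelli combined with $2^{-k(n)}\to 0$ then yields $\rho(X_n,X)\to 0$ $\widehat{\mathbb{P}}$-almost surely along the full sequence. The main obstacle is precisely this step, namely obtaining full-sequence rather than mere subsequence a.s.\ convergence, since a fixed-level coupling produces only convergence in probability. It is overcome by combining (i) the index-dependent partition level $k(n)$, tuned so that $\mathrm{TV}_{k(n)}(n)$ is summable in $n$ while $k(n)$ still diverges, with (ii) the TV-optimal discrete coupling at level $k(n)$, which makes the cell-mismatch probability exactly $\mathrm{TV}_{k(n)}(n)$ rather than some larger bound.
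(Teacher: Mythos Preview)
The paper does not prove this lemma at all; it is simply quoted as the classical Skorokhod representation theorem and used as a black box. So there is no ``paper's approach'' to compare against, but your argument does contain a genuine gap. The step ``define $k(n)$ as the largest $k$ with $\mathrm{TV}_k(n)\le 1/n^2$; then $k(n)\uparrow\infty$'' is unjustified and in fact false in general. Since $\pi_{k+1}$ refines $\pi_k$, the quantity $\mathrm{TV}_k(n)$ is nondecreasing in $k$ for each fixed $n$, while weak convergence imposes no rate in $n$. Concretely, take $S=\{0,1\}$, $\mathbb{P}=\delta_0$, $\mathbb{P}_n=(1-\tfrac1n)\delta_0+\tfrac1n\delta_1$: any level-$k$ partition with $k\ge 1$ must separate the two points, so $\mathrm{TV}_k(n)=1/n>1/n^2$ for every $k\ge 1$ and every $n\ge 2$, and your $k(n)$ stays at $0$ forever. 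No summable threshold $a_n$ fixed in advance can force $\mathrm{TV}_{k}(n)\le a_n$ at a diverging level $k$, so the Borel--Cantelli-in-$n$ strategy cannot be carried through as written. (There is also a smaller slip: a ``matched cell'' need not have diameter $\le 2^{-k(n)}$ when it happens to be the leftover cell outside the compact set you started from.)

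The standard repair is to make the mismatch event depend on the level $k$ rather than on $n$. Choose $N_k\uparrow\infty$ with $\mathbb{P}_n(A_{k,j})\ge(1-2^{-k})\,\mathbb{P}(A_{k,j})$ for every cell $j$ whenever $n\ge N_k$; set $k(n)=k$ on $[N_k,N_{k+1})$; and decide matching for every $n$ in that block via a \emph{single} auxiliary uniform $\xi$ through the event $\{\xi\le 1-2^{-k}\}$. These events increase in $k$ to a set of full measure, and the large-diameter leftover cell $A_{k,0}$ can be taken with $\mathbb{P}(A_{k,0})\le 2^{-k}$, summable in $k$. Hence almost surely the sample point is matched into a small-diameter cell for all large $k$, and therefore for all large $n$, with no need for summability in $n$.
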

We will make use of the following result, which gives a criterion for tightness of sequences of laws associated to continuous processes.
\begin{lem}\label{lema}
Let $X^n(t)$, $n=1,2...,$ be a sequence of d-dimensional continuous processes satisfying two conditions:
\begin{itemize}
  \item [(i)] There exist positive positive constants, $M$ and $l$ such that
  $$\mathbb{E}(|X_0^n|^l) \leq M \;\;\; \mbox{for every} \;\; n=1,2,..$$
  \item [(ii)] There exist positive positive constants, $p, q$, $M_k$, $k=1,2,...$ such that:
  $$\mathbb{E}(|X_t^n-X_s^n|^p) \leq M_k |t-s|^{1+ q}\;\; \mbox{for every} \;\; n \;\;\mbox{and}\;\; t,s \in [0,k].$$
\end{itemize}
Then there exits a subsequence $(n_k)_{k \geq1}$, a probability space $(\widehat{\Omega}, \widehat{\mathcal{F}}, \widehat{\mathbb{P}})$ and d-dimensional continuous processes $\widehat{X}_{n_k},$ $k=1,2,...$ and $\widehat{X}$ defined on it such that
\begin{enumerate}
  \item The laws of $\widehat{X}^{n_k}$ and $X^{n_k}$ coincide.
  \item $\widehat{X}_t^{n_k}$ converges to $\widehat{X}_t$ uniformly on every finite time interval $\widehat{\mathbb{P}}$ almost surely.
\end{enumerate}
\end{lem}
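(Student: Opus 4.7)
The plan is to combine the classical Kolmogorov--Centsov tightness criterion with Prohorov's theorem and the Skorohod representation lemma stated just above. Concretely, I will show that the laws $\mathbb{P}_n$ of $X^n$ on the Polish space $C(\mathbb{R}^+,\mathbb{R}^d)$ (equipped with the metric of uniform convergence on compacts) form a tight family; Prohorov then yields a weakly convergent subsequence, and the preceding Skorohod lemma upgrades this to an almost sure uniform convergence on a common probability space.

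First I would reduce the problem to tightness on $C([0,k],\mathbb{R}^d)$ for each fixed $k \in \mathbb{N}$, since a sequence of laws on $C(\mathbb{R}^+,\mathbb{R}^d)$ is tight iff its restrictions to each $C([0,k],\mathbb{R}^d)$ are tight, and a diagonal extraction at the end produces one subsequence that works on all compact time intervals. On $C([0,k],\mathbb{R}^d)$ tightness is equivalent, by the Arzelà--Ascoli characterization of compact sets, to two statements: (a) for every $\eta>0$ there is a compact $K\subset\mathbb{R}^d$ with $\mathbb{P}(X_0^n \in K) \geq 1-\eta$ for all $n$; and (b) for every $\eta,\varepsilon>0$ there exists $\delta>0$ such that
\begin{equation*}
\sup_n \mathbb{P}\Bigl(\sup_{\stackrel{s,t \in [0,k]}{|t-s|\leq \delta}} |X_t^n - X_s^n| > \varepsilon\Bigr) \leq \eta.
\end{equation*}
Point (a) is immediate from hypothesis (i) by Chebyshev/Markov: $\mathbb{P}(|X_0^n|>R) \leq M/R^l$.

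The core step is deriving (b) from hypothesis (ii); this is the Kolmogorov--Centsov part and is the main technical obstacle. I would invoke the Garsia--Rodemich--Rumsey inequality (or, equivalently, a dyadic chaining argument): the moment bound $\mathbb{E}|X_t^n - X_s^n|^p \leq M_k|t-s|^{1+q}$ on $[0,k]$ yields, for any $\gamma \in (0,q/p)$, a constant $C_{k,\gamma}$ independent of $n$ such that
\begin{equation*}
\mathbb{E}\Bigl[\sup_{\stackrel{s \neq t}{s,t\in[0,k]}} \frac{|X_t^n - X_s^n|^p}{|t-s|^{p\gamma}}\Bigr] \leq C_{k,\gamma}.
\end{equation*}
In particular the random Hölder seminorms $\|X^n\|_{\gamma,[0,k]}$ are bounded in $L^p$ uniformly in $n$, so by Markov's inequality $\sup_n \mathbb{P}(\|X^n\|_{\gamma,[0,k]} > R) \to 0$ as $R\to\infty$; since on each level set $\{\|X^n\|_{\gamma,[0,k]} \leq R\}$ the modulus of continuity is dominated by $R\delta^\gamma$, property (b) follows.

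Having established tightness on each $C([0,k],\mathbb{R}^d)$, a standard diagonal argument extracts a subsequence $(n_k)$ along which the laws $\mathbb{P}_{n_k}$ converge weakly on $C(\mathbb{R}^+,\mathbb{R}^d)$ to some probability measure $\mathbb{P}$ by Prohorov's theorem. Finally, since $(C(\mathbb{R}^+,\mathbb{R}^d),d)$ with $d(x,y)=\sum_k 2^{-k}(1\wedge \sup_{[0,k]}|x_t-y_t|)$ is a complete separable metric space, the Skorohod representation lemma stated just above applies directly: there exist a probability space $(\widehat{\Omega},\widehat{\mathcal{F}},\widehat{\mathbb{P}})$ and $C(\mathbb{R}^+,\mathbb{R}^d)$-valued random variables $\widehat{X}^{n_k}$ and $\widehat{X}$ with the prescribed laws such that $\widehat{X}^{n_k} \to \widehat{X}$ $\widehat{\mathbb{P}}$-a.s.\ in the topology of uniform convergence on compacts, which is exactly conclusion (2). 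Conclusion (1) is the marginal-law assertion of that same lemma. This completes the proof.
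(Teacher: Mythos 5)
Your proposal is correct, but there is nothing in the paper to compare it against: the paper states this lemma without any proof, treating it as a known result (it is essentially Theorem~4.2 of Chapter~1 of Ikeda--Watanabe \cite{ikeda}, which appears in the bibliography, and whose proof indeed runs along the lines you give). Your argument is the standard textbook proof and in effect supplies the omitted details: the reduction of tightness on $C(\mathbb{R}^+,\mathbb{R}^d)$ to tightness on each $C([0,k],\mathbb{R}^d)$ is valid, condition (a) follows from hypothesis (i) by Markov's inequality, and your Garsia--Rodemich--Rumsey step is correctly calibrated --- the double integral $\mathbb{E}\int\!\!\int_{[0,k]^2}|X^n_t-X^n_s|^p|t-s|^{-p\gamma-2}\,ds\,dt$ is finite uniformly in $n$ precisely when $\gamma<q/p$, which is the range you state, and the resulting uniform $L^p$ bound on the H\"older seminorms gives the uniform modulus-of-continuity estimate (b). Prohorov's theorem and the Skorokhod representation lemma quoted just before the statement then yield conclusions (1) and (2) exactly as you say, since almost sure convergence in the metric $d(x,y)=\sum_k 2^{-k}\bigl(1\wedge\sup_{[0,k]}|x_t-y_t|\bigr)$ is uniform convergence on compacts. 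One small redundancy: once you know tightness holds on each $C([0,k],\mathbb{R}^d)$, tightness on $C(\mathbb{R}^+,\mathbb{R}^d)$ itself follows by intersecting compact sets chosen with tolerances $\eta 2^{-k}$, so a single application of Prohorov on the Polish space $C(\mathbb{R}^+,\mathbb{R}^d)$ suffices and the diagonal extraction you mention is not needed; it is harmless, but you should pick one of the two mechanisms rather than invoke both.
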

\begin{proof}[Proof of theorem \ref{thm}]
 $\alpha(.)$ is a borel function, so , there exists a sequence of piecewise constant functions $\alpha_n$ such that $ \lim_{n \rightarrow+ \infty}\alpha_n(t) =\alpha(t),\;$ \\$\forall t \in [0,1]$. Corresponding to such sequences $\alpha_n$, we introduce the corresponding sequences $(X^n)_{n \geq 0}$ which are solutions to equation (\ref{eqpr}) constructed as in the beginning of this section, thus
$$X_t^n=x+B_t+\intot (2\alpha_n(s)-1)dL_s^0(X^n)\;\;\;\;  \forall n \in \mathbb{N}$$

By Lemma \ref{lema} and Proposition \ref{kolmo},
It is clear that the family $(X^n,B)$ is tight. Then there exist a probability space $(\widehat{\Omega},  \widehat{\mathcal{F}},\widehat{\mathbb{P}})$ and a sequence $(\widehat{X}^n,\widehat{B}^n)$ of stochastic processes defined on it such that:
\begin{itemize}
  \item {[$P.1$]}\;The laws of $(X^n,B)$ and $(\widehat{X}^n,\widehat{B}^n)$ coincide for every $n \in \mathbb{N}$.
  \item {[$P.2$]}\;There exists a subsequence $(n_{k})_{k \geq 0}$ such that:
  $(\widehat{X}^{n_k},\widehat{B}^{n_k})$ converge to $(\widehat{X},\widehat{B})$ uniformly on every compact subset of $\mathbb{R}^{+}$ $\widehat{\mathbb{P}}-$a.s.
\end{itemize}
If we denote $ \widehat{\mathcal{F}}_t^n=\sigma\{\widehat{X}_s^{n},\widehat{B}_s^{n}\;; \; s \leq t\}$ and $\widehat{\mathcal{F}}_t=\sigma\{\widehat{X}_s,\widehat{B}_s\;; \; s \leq t\}$, then
$(\widehat{B}^n, \widehat{\mathcal{F}}^n)$ and $(\widehat{B}, \widehat{\mathcal{F}})$ are Brownian motions.  According to property $[P.1]$ and the fact that $X_t^{n}$ satisfies equation (\ref{eqpr}), it's can be proved that $$\mathbb{E }\left|\widehat{X}_t^n-x-\widehat{B}_t^n-  \intot \alpha_n(s)dL_s^{0}(\widehat{X}^{n})\right|^{2}=0.$$
In other words, $\widehat{X}_t^n$ satisfies the SDE:
$$\widehat{X}_t^n=x+\widehat{B}_t^{n}+ \intot (2\alpha_{n}(s)-1)dL_{s}^{0}(\widehat{X}^n).$$
On one hand, from the fact that $(|\widehat{X}_t^{n_k}|)_{t \geq 0}\stackrel{law}{=}(|\widehat{B}_t^{n_k}|)_{t \geq 0}$ and condition $[P.2]$, we deduce that $\;(|\widehat{X}_t|)_{t \in[0,1]}\stackrel{law}{=}(|\widehat{B}_t|)_{t \in [0,1]}$. Thus, $(|\widehat{X}_t|)_{t \in[0,1]}$ is a semimartingale and admits a symmetric local time process $L_{.}(|\widehat{X}|)$.
\\
A consequence of the   Tanaka formula is that:
$$|\widehat{X}_t^{n_{k}}|=|x|+\intot \sg(\widehat{X}_s^{n_{k}})d\widehat{B}_s^{n_k}+L_t^{0}(|\widehat{X}^{n_{k}}|)\;\;\;\;\;(\mbox{with}\;\; \sg(0)=0).$$
As $|\widehat{X}|$ is a reflected Brownian motion we have
\begin{eqnarray}\label{eqcon1}
|\widehat{X}_t|=|x|+\widetilde{B}_t+L_t^0(|\widehat{X}|)
\end{eqnarray}
for some Brownian motion $\widetilde{B}$. By using property $[P.2]$ it holds that
$$\int_0^{.} \sg(\widehat{X}^{n_{k}})d\widehat{B}^{n_k}\xrightarrow[ucp]{L^2}\int_0^{.} \sg(\widehat{X})d\widehat{B} .$$
 Thus,  from the a.s uniform convergence of $(\widehat{X}_t^{n_{k}})_{t \in [0,1]}$ towards $(\widehat{X}_t)_{t \in [0,1]}$ and the dominated convergence for stochastic integrals (see e.g. \cite{RY}) we can see that there is a finite variation process $A$ such that:

$$\sup_{s\in [0,1]}|L_s^{0}(\widehat{X}^{n_{k}})-A_s|\xrightarrow[n \rightarrow \infty]{\mathbb{P}}\;0,$$
and that
$$\sup_{s\in [0,1]}||\widehat{X}_s^{n_k}|-(|x|+ \int_{0}^{s} \sg(\widehat{X}_u)d\widehat{B}_u+A_s)|\xrightarrow[n \rightarrow \infty]{\mathbb{P}}\;0.$$
Consequently,
\begin{eqnarray} \label{eqcon2}
|\widehat{X}_t|=|x|+\intot \sg(\widehat{X}_s)d\widehat{B}_s+A_t
\end{eqnarray}
Using (\ref{eqcon1}) and (\ref{eqcon2}) and uniqueness of the Doob decomposition of a semimartingale :
$$|\widehat{X}_t|=|x|+\intot \sg(\widehat{X}_s)d\widehat{B}_s+L_t^{0}(|\widehat{X}|).$$
Note that we have proven that
$$\left[\sup_{s \in[0,1]}|L_t^{0}(\widehat{X}^{n_k})-L_t^{0}(|\widehat{X}|)|> \epsilon\right]\xrightarrow[n \rightarrow \infty]{P}\;0$$
Using the fact that $\widehat{X}^{n_k}$  is a Markov process with transition family (t.f.) $p^\alpha(s,t,x,y)dy$ (see  \ref{prop-markov}) combined with property $[P.2]$ yields $\widehat{X}^{n_k}\xrightarrow{ucp}\; \widehat{X}$, it is obvious that $\widehat{X}$ is also a Markov process with the same t.f. Now we proceed to the proof of that $\widehat{X}$ is a solution to (\ref{eqpr}). We follow \'{E}toré and Martinez \cite{etore} rather closely.
Hence we may proceed just as in the proof of [Theorem $5.6$ \cite{etore} ]. In fact, by some calculus, we get
$$\E(\widehat{X}_t| \mathcal{F}_s)=\widehat{X}_s+ \int_{0}^{t-s}(2\alpha-1) \circ\sigma_s(u) \frac{e^{-\frac{|\widehat{X}_s|^2}{2u}}}{\sqrt{2 \pi u}}du.$$
Note that $\widehat{X}$ is a Markov process and $|\widehat{X}|$ is a reflected Brownian motion. So that for $s< t$ :
$$\mathbb{E}^0\left(\intot (2\alpha(u)-1)  dL_u^{0}(\widehat{X})|\mathcal{F}_s\right)=\int_{0}^{s}(2\alpha(u)-1) dL_u^{0}(\widehat{X})+ \mathbb{E}^0\left(\int_{s}^{t} (2\alpha-1)(u) dL_u^{0}(\widehat{X})|\mathcal{F}_s\right).$$
but,
\begin{eqnarray*}
\mathbb{E}^0\left(\int_{s}^{t} (2\alpha-1)(u) dL_u^{0}(\widehat{X})|\mathcal{F}_s\right)&=&\mathbb{E}^{\widehat{X}_s}\left(\int_{s}^{t} (2\alpha-1)\circ\sigma_s(u) dL_u^{0}(\widehat{X}^{(2\alpha-1) \circ\sigma_s})\right)\\
&=&\mathbb{E}^{\widehat{X}_s}\left(\int_{s}^{t} (2\alpha-1)\circ\sigma_s(u)  dL_u^{0}(|\widehat{B}|)\right)\\
&=& \int_{0}^{t-s}(2\alpha-1) \circ\sigma_s(u) \frac{e^{-\frac{|\widehat{X}_s|^2}{2u}}}{\sqrt{2 \pi u}}du.
\end{eqnarray*}
Combining these facts ensures that $\{\widehat{X}_t- \intot (2\alpha-1)(u)dL_u^{0}(\widehat{X})\;:\; t \geq 0\}$ is a $(\mathcal{F}_t)$ local martingale. Since $\langle \widehat{X}\rangle_t= \langle|\widehat{X}|\rangle_t=t$, we deduce that, \\
$\{\widehat{X}_t- \intot (2\alpha-1)_u dL_u^{0}(\widehat{X})\;:\; t \geq 0\}$ is a $(\mathcal{F}_t)$-martingale Brownian motion, ensuring that $\widehat{X}$ satisfies (\ref{eqpr}).
\end{proof}
\section{Stability of the solution}\label{stability}

Another key property of the solutions of (\ref{eqpr}) is the following stability result which follows from an application of  Skorokhod Representation theorem.

\begin{thm}
Let $\{\alpha_n(t),\alpha(t): [0,1]\rightarrow [0,1]\}$ be a family of borel functions. Assume that $\lim_{n\rightarrow + \infty}\alpha_n(t)=\alpha(t), \; \; \forall t \in [0,1]$. If we denote $X^{n}$ the solution of (\ref{eqpr}) corresponding to $\alpha_n$, then the following result holds:
$X^{n} \xrightarrow[ucp]{L^2}X$, where $X$ is the unique solution of (\ref{eqpr}) corresponding to $\alpha$.
\end{thm}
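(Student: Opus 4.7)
The plan is to mimic the argument used in the proof of Theorem \ref{thm}: first establish tightness of the pair $(X^n, B)$, extract a Skorokhod representation converging almost surely, identify the limit as a solution of (\ref{eqpr}) with parameter $\alpha$, then invoke pathwise uniqueness to upgrade the subsequential convergence to full convergence in probability, and finally promote this to $L^2$ convergence via uniform integrability.

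First, the family $\{X^n\}$ is tight: Proposition \ref{kolmo} gives $\mathbb{E}^x|X^n_{t+\epsilon} - X^n_t|^4 \leq C \epsilon^2$ with a constant $C$ independent of $n$, which together with the fixed initial condition verifies the hypotheses of Lemma \ref{lema}. Tightness of $(B)$ is trivial, hence the joint family $(X^n, B)$ is tight. Applying Skorokhod representation, I extract a subsequence $(n_k)$ and processes $(\widehat{X}^{n_k}, \widehat{B}^{n_k})$ and $(\widehat{X}, \widehat{B})$ on a new probability space $(\widehat{\Omega}, \widehat{\mathcal{F}}, \widehat{\mathbb{P}})$ such that the laws agree and the convergence is uniform on compact time intervals, $\widehat{\mathbb{P}}$-a.s. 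Each $\widehat{X}^{n_k}$ still satisfies
\begin{equation*}
\widehat{X}^{n_k}_t = x + \widehat{B}^{n_k}_t + \int_0^t (2\alpha_{n_k}(s)-1)\, dL_s^0(\widehat{X}^{n_k}),
\end{equation*}
and $\widehat{B}^{n_k}$ remains a Brownian motion in its own filtration, with limit $\widehat{B}$ also Brownian.

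Next, I identify the limit. As in the proof of Theorem \ref{thm}, using $|\widehat{X}^{n_k}| \stackrel{d}{=} |\widehat{B}^{n_k}|$, Tanaka's formula, and uniqueness of the Doob--Meyer decomposition, one obtains $\sup_{s \leq T} |L_s^0(\widehat{X}^{n_k}) - L_s^0(\widehat{X})| \to 0$ in probability for every $T$. To pass to the limit in the drift term I split
\begin{equation*}
\int_0^t (2\alpha_{n_k}(s)-1)\, dL_s^0(\widehat{X}^{n_k}) - \int_0^t (2\alpha(s)-1)\, dL_s^0(\widehat{X})
\end{equation*}
into a piece $\int_0^t (2\alpha_{n_k}(s)-1)\, d[L^0(\widehat{X}^{n_k})-L^0(\widehat{X})]_s$, controlled by the uniform convergence of local times and boundedness of $\alpha_{n_k}$, and a piece $\int_0^t 2(\alpha_{n_k}(s)-\alpha(s))\, dL_s^0(\widehat{X})$, handled by dominated convergence since $|\alpha_n - \alpha| \leq 2$, $\alpha_{n_k} \to \alpha$ pointwise, and $L^0_t(\widehat{X})$ is a.s. finite. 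Hence $\widehat{X}$ satisfies (\ref{eqpr}) with parameter $\alpha$ and Brownian motion $\widehat{B}$.

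Finally, pathwise uniqueness for (\ref{eqpr}) yields a measurable Yamada--Watanabe functional $F$ with $\widehat{X} = F(\widehat{B})$ a.s., hence the limit law is the same along every subsequence, so $(X^n, B)$ converges in law to $(X, B) = (F(B), B)$. Since $B$ is the same Brownian motion on the original space for every $n$ and pathwise uniqueness holds, a standard argument promotes convergence in law to convergence in probability of $X^n$ to $X$ uniformly on compact time intervals. To pass to $L^2$, I use that $|X^n_t| \stackrel{d}{=} |B_t|$ gives uniform $L^p$ bounds on $\sup_{s \leq T}|X^n_s|$ for every $p \geq 1$, so $\{|X^n_t - X_t|^2\}_n$ is uniformly integrable on any compact interval, upgrading ucp convergence to convergence in $L^2$ uniformly on compacts. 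The main obstacle is the joint passage to the limit in the local time term, where one must simultaneously control the varying random measures $dL^0(X^n)$ and the pointwise (but not necessarily uniform) convergence $\alpha_n \to \alpha$; the splitting described above together with the uniform moment control coming from Proposition \ref{kolmo} is what makes this step work.
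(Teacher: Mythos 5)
Your overall architecture matches the paper's: tightness from Proposition \ref{kolmo} via Lemma \ref{lema}, a Skorokhod representation along a subsequence, identification of the limit as the solution for $\alpha$, pathwise uniqueness, and uniform integrability to upgrade to $L^2$. Your endgame differs in a harmless way: the paper includes the target process in the tight vector $(X^n,X,B)$ and argues by contradiction, so that on the new space both limit processes solve (\ref{eqpr}) driven by the \emph{same} Brownian motion and pathwise uniqueness forces them to coincide; you instead invoke the Yamada--Watanabe functional $X=F(B)$, uniqueness of subsequential limit laws, and the standard lemma that joint convergence in law with a fixed second coordinate upgrades to convergence in probability. That route is legitimate and arguably cleaner than the paper's contradiction scheme.

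The genuine gap is in your identification of the limit drift. The first piece of your splitting,
\begin{equation*}
\intot (2\alpha_{n_k}(s)-1)\,d\bigl(L_s^0(\widehat X^{n_k})-L_s^0(\widehat X)\bigr),
\end{equation*}
is \emph{not} controlled by $\sup_{s\le T}|L^0_s(\widehat X^{n_k})-L^0_s(\widehat X)|$ together with $\|\alpha_{n_k}\|_\infty\le 1$. Uniform convergence of the increasing processes gives weak convergence of the measures $dL^0(\widehat X^{n_k})$ to $dL^0(\widehat X)$, but you are pairing them against integrands that are merely Borel and vary with $k$. Even for a \emph{fixed} bounded Borel $g$, $F_n\to F$ uniformly with $F_n,F$ increasing does not yield $\int g\,dF_n\to\int g\,dF$ (take $g$ the indicator of a set null for $dF$ but charged by every $dF_n$); one would need $g$ continuous $dF$-a.e., or equicontinuity, or uniformly bounded variation of the $\alpha_n$ so as to integrate by parts --- none of which is available for arbitrary Borel functions $\alpha_n$ converging only pointwise. (Your second piece, $\intot 2(\alpha_{n_k}-\alpha)(s)\,dL^0_s(\widehat X)$, is fine by dominated convergence against the fixed measure.) This is exactly the difficulty the paper sidesteps: in the proof of Theorem \ref{thm}, to which its stability proof defers, the limit is identified not by passing to the limit in the drift integral but through the Markov property --- each $\widehat X^{n_k}$ has transition density $p^{\alpha_{n_k}}$ (Proposition \ref{prop-markov}), the explicit formula plus dominated convergence give $p^{\alpha_{n_k}}\to p^{\alpha}$ (pointwise convergence of $\alpha_n$ enters harmlessly there, under a $du$-integral), hence $\widehat X$ is Markov with kernel $p^\alpha$, and a conditional-expectation computation shows that $\widehat X_t-\intot(2\alpha-1)(u)\,dL^0_u(\widehat X)$ is a martingale with bracket $t$, i.e.\ a Brownian motion. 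Replace your splitting argument by this kernel/martingale identification; the remainder of your proof then goes through.
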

\begin{proof}
Suppose that the conclusion of our theorem is false, then there exist a positive number $\delta$  and a subsequence $n_k$ such that
$$\inf_{n_k}\mathbb{E}[\sup_{0\leq s \leq 1} |X_s^{n_k}-X_s|^2] \geq  \delta.$$
According to lemma \ref{lema}, the family $Z^n=(X^n,X,B)$ satisfies conditions (i) and (ii) with $p=4$ and $q=1$.
By Skorokhod selection theorem, there exists a filtered probability space $(\Omega,  \mathcal{F}^{'},\mathbb{P}^{'} , (\mathcal{F}^{'}_t))$ carrying a sequence of stochastic processes
$\overline{Z}^{n_k}$  denoted by
$\overline{Z}^{n}=(X^{' n},\overline{X}^{n},\overline{B}^{n})$
with the following properties:
\begin{itemize}
  \item {[P.1.]}
  $$Z^{n}\stackrel{law}{=}\overline{Z}^{n}$$
  \item {[P.2.]}There exists a subsequence $(\overline{Z}^{n_{k}})_k$  which converges uniformly to $(X',\overline{X},\overline{B})$.
\end{itemize}
Proceeding as in the proof of Proposition \ref{prop4}, we can see that the limiting processes satisfy the following equations.
$$X'_t=x+\overline{B}_t +\intot (2\alpha(s)-1)dL_s^0(X');$$
$$\overline{X}_t=x+\overline{B}_t +\intot (2\alpha(s)-1)(s)dL_s^0(\overline{X}).$$
In other words, $X'$ and  $\overline{X}$ solves equation (\ref{eqpr}). Thus by pathwise uniqueness,  $X'$and $\overline{X}$ are indistinguishable.\\
By uniform integrability, it holds
\begin{eqnarray*}
\delta & \leq & \liminf_{k \in \mathbb{N}} \mathbb{E}[\sup_{0 \leq s \leq 1 } |X_t^{n_k}-X_t|^2]= \liminf_{k \in \mathbb{N}} \widehat{\mathbb{E}}[\sup_{0 \leq s \leq 1 }|X_{t}'^{n_k}-\overline{X}_t^{n_k}|^2] \\
&\leq &  \widehat{\mathbb{E}}[\sup_{0 \leq s \leq 1 } |X'_t-\overline{X}_t|^2]
\end{eqnarray*}
which is a contradiction
\end{proof}

\end{spacing}

\begin{thebibliography}{1}
\bibitem{Yor} J. Azema, M. Yor. En guise d'introduction. Temps locaux, Astérisque n° $52-53$, soc. Math. France $1978$.
\bibitem{bah}K. Bahlali, B. Mezerdi and Y. Ouknine, Pathwise uniqueness and approximation of solutions of stochastic differential equations, sem. de Prob. XXXII, lecture Notes in Maths. 1686, p, 166-187, Springer-Verlag (1997).
\bibitem{Barlow-2}.M. T. Barlow, “Construction of a martingale with given absolute value,” The Annals of Probability, vol. 9, no. 2, pp. 314–320, 1981.
\bibitem{Barlow-1}
M.~T. Barlow, Skew Brownian motion and a one-dimensional stochastic
  differential equation, Stochastics 25 (1988), no.~1, 1--2.
\bibitem{karoui}
N. el. karoui. Temps locaux et balayage des semimartingales, sem. Prob. XIII, Lectures notes in Mathematics 721, Springer Verlag, page 443.
\bibitem{gilat}
D. Gilat. Every non-negative submartingale is the absolute value of a martingale. Ann of Proba., 5, p. 475-481, 1977.
\bibitem{ikeda}N. Ikeda, S. Watanbe, Stochastic differential equations and diffusion processes. North-Holland. Amsterdam (Kodansha Ltd, Tokyo)(1981)
\bibitem{meyer}
P. A. Meyer, C. Stricker and M. Yor, sur une formule de la théorie des balayages, sem. de Prob. XIII, Lectures notes in Mathematics 721, Springer verlag, p. 478.
 \bibitem{etore}
 P. \'{E}toré and M. Martinez on the existence of a time inhomogeneous skew Brownian motion and some related laws. Electron. J. Probab. 17, no. 19, 1-27, (2012).
\bibitem{H-S}
J.~M. Harrison and L.~A. Shepp, On skew Brownian motion, Ann. Probab. 9 , no.~2, 309--313 (1981).
\bibitem{ito74a}
K. It{\^o} and H.P. M{c}{K}ean. Diffusion and Their Sample Paths. Springer-Verlag, 2${}^{\text{nd}}$ edition, 1974.
\bibitem{legall}
J. F. LeGall. One-dimensional stochastic differential equations involving local times of unknown process. Stochastic analysis and application (Swansea 1983) 51-82. Lecture notes in mathematics 1095, Springer-verlag, Berlin 1984.
\bibitem{lejay-2006}
Antoine Lejay, On the constructions of the skew Brownian motion, Probab. Surv. 3, 413--466 (2006).
\bibitem{levy} P. Lévy. Sur les montées des semi-martingales (cas continu), Astérisque, $52-53,$ S.M.F $1978$
\bibitem{nike}
J. Najnudel and A. Nikeghbali, on some universal $\sigma$-finite measures and some extensions of Doob's optional stopping theorem (2009).
\bibitem{Ouknine}
Y.~Ouknine,  "Skew-{B}rownian motion" and derived processes, Teory Probab. Appl. 35 163-169 (1990).
\bibitem{prokaj}
V.~Prokaj, Unfolding the skororhod reflection of a semimartingale, Statistics and probability Letters $79$, $4$ $534$ (2009).
\bibitem{Protter}
P.~E. Protter, \emph{Stochastic integration and differential equations}, Stochastic Modelling and Applied Probability, vol.~21, Springer-Verlag, Berlin, Second edition. Version 2.1, Corrected third printing, 2005.
\bibitem{RY}
D.~Revuz and M.~Yor, \emph{Continuous martingales and {B}rownian motion}, 3rd ed, Springer-Verlag, 1999.
\bibitem{stri}
C. Stricker, Semimartingales et valeur absolue, sem de Prob. XIII, Lecture notes in Maths. 721, Springer Verlag, p. 472.
\bibitem{walsh78a}
J.B. Walsh.
 A diffusion with a discontinuous local time. In Temps locaux, Ast{\'e}risques, pp. 37--45. Soci\'et\'e
  Math\'ematique de France, 1978.
\bibitem{yamada2}
Shinzo Watanabe and Toshio Yamada, On the uniqueness of solutions of stochastic differential equations. II
, J. Math. Kyoto Univ. 11, 553--563 (1971).
\bibitem{Weinryb}
Sophie Weinryb, \'{E}tude d'une \'equation diff\'erentielle stochastique avec temps local, C. R. Acad. Sci. Paris S\'er. I Math.296 , no.~6, 319--321, (1983).
\bibitem{sigma}
Marc Yor, Les inégalités de sous-martingales, comme conséquences de la relation de domination, Stochastics 3, no. 1-15 (1979).
\end{thebibliography}
\end{document}